\newcommand{\I}{\mathrm{i}}
\newcommand{\ep}{\epsilon}
\newtheorem{thm}{Theorem}[section]
\newtheorem{lmm}[thm]{Lemma}
\newtheorem{prop}[thm]{Proposition}
\newcommand{\bn}{{\bf n}}
\newcommand{\ee}{\mathbb{E}}
\newcommand{\pp}{\mathbb{P}}
\newcommand{\ra}{\rightarrow}
\newcommand{\rr}{\mathbb{R}}
\newcommand{\var}{\mathrm{Var}}
\begin{document}
\title{Fluctuations of the Bose-Einstein condensate}
\author{Sourav Chatterjee}
\address{Courant Institute of Mathematical Sciences, New York University, 251 Mercer Street, New York, NY 10012}
\email{sourav@cims.nyu.edu}
\author{Persi Diaconis}
\address{Department of Statistics, Stanford University, 390 Serra Mall, Stanford, CA 94305}
\email{diaconis@math.stanford.edu}
\thanks{Sourav Chatterjee's research was partially supported by the  NSF grant DMS-1005312}
\thanks{Persi Diaconis's research was partially supported by NSF grant DMS-0804324}
\keywords{Bose-Einstein condensation, canonical ensemble, anomalous fluctuations, central limit theorem}
\subjclass[2010]{82B10, 81V25, 60F05}

\begin{abstract}
This article gives a rigorous analysis of the fluctuations of the Bose-Einstein condensate for a system of non-interacting bosons in an arbitrary potential, assuming that the system is governed by the canonical ensemble. As a result of the analysis, we are able to tell the order of fluctuations of the condensate fraction as well as its limiting distribution upon proper centering and scaling. This yields interesting results. For example, for a system of $n$ bosons in a 3D harmonic trap near the transition temperature, the order of fluctuations of the condensate fraction is $n^{-1/2}$ and the limiting distribution is normal, whereas for the 3D uniform Bose gas, the order of fluctuations is $n^{-1/3}$ and the limiting distribution is an explicit non-normal distribution. For a 2D harmonic trap, the order of fluctuations is $n^{-1/2}(\log n)^{1/2}$, which is larger than $n^{-1/2}$ but the limiting distribution is still normal. All of these results come as easy consequences of a general theorem. 
\end{abstract}
\maketitle

\section{Introduction}
Consider a system of $n$ non-interacting particles, each of which can be in one of a discrete set of quantum states. If the particles are distinguishable, then the state of the system is described by the $n$-tuple consisting of the states of the $n$  particles. On the other hand if the particles are {\it indistinguishable}, then the state of the system is described by the sequence $(n_0, n_1, n_2,\ldots)$, where $n_j$ is the number of particles in state $j$. From now on we will only consider indistinguishable particles (bosons). If state $j$ has energy $E_j$, then the total energy of the system is $\sum_{j=0}^\infty n_j E_j$. (Here `energy' means the energy associated with a given state; that is, if the state is an eigenfunction of a Schr\"odinger operator, then the energy is the corresponding eigenvalue.) 

If the system is in thermal equilibrium, then the Boltzmann hypothesis implies that the chance of observing the system in state $(n_0, n_1,\ldots)$ is proportional to 
\[
\exp\biggl(-\frac{1}{k_BT} \sum_{j=0}^\infty n_j E_j\biggr),
\]
where $T$ is the temperature and $k_B$ is Boltzmann's constant. This is the so-called `canonical ensemble' for a system of $n$ non-interacting bosons with energy levels $E_0, E_1, \ldots$. Typically, the $E_j$'s are arranged in increasing order, so that $E_0$ is the energy of the ground state. 

Building on the work of Bose \cite{bose}, Einstein \cite{einstein24, einstein25} realized that for a system of indistinguishable particles modeled as above, there is a transition temperature below which a macroscopic fraction of the particles settle down in the ground state. This phenomenon is known as Bose-Einstein condensation. The first realization of a Bose-Einstein condensate was obtained in 1995 \cite{andersonetal, davisetal}, resulting in an explosion of activity in this field. For modern treatments of the subject and surveys of the physics literature, see e.g.~\cite{dalfovo, leggett, kocharov, kocharovsky, pethicksmith}. For rigorous mathematical results and further references, see~\cite{aizenman, beau, erdosetal06, erdosetal10, liebseiringer, lieb, tamura, verbeure11}. 

Besides the canonical model described above, there are two other standard approaches to modeling Bose-Einstein condensation. The {\it grand canonical ensemble} assumes that the system is allowed to exchange particles with a neighboring particle reservoir. In other words, the total number  of particles is  allowed to vary. In the grand canonical ensemble, the possible states of the system are all sequences $(n_0,n_1,\ldots)$ of non-negative integers  (instead of sequences summing to a fixed $n$), and the probability that the system is in state $(n_0, n_1,\ldots)$ is proportional to
\[
\exp\biggl(-\frac{1}{k_BT} \sum_{j=0}^\infty n_j (E_j- \mu)\biggr),
\]
where $\mu$ is a quantity called the `chemical potential'. Note that $\mu$ must be strictly less than $E_0$ for this to be a proper probability measure. Given a temperature $T$ and an expected particle number $n$, the chemical potential $\mu$ is determined using the condition that the expected number of particles equals $n$ at temperature $T$. 

The {\it microcanonical ensemble}, on the other hand, assumes that both the total number of particles as well as the total energy are given (or given approximately), and the state of the system is drawn uniformly at random from all possible states satisfying these two constraints.

It is a general characteristic of physical systems that the three ensembles have similar behavior (``equivalence of ensembles''). However, the physics community realized --- surprisingly recently, about twenty years ago --- that  Bose gases  exhibit a striking departure from this general rule, in the regime where the condensate appears: The fluctuations of the size of the condensate fraction are of macroscopic size in the grand canonical ensemble, whereas they are microscopic in the other two ensembles. The problem of condensate fluctuations in the context of modern experiments was brought to the attention of the theoretical physics community by the paper of Grossmann and Holthaus \cite{gh}. Then, the idea of H.~D.~Politzer of treating the condensate as a reservoir of particles for the excited subsystem was used to define the concept of the ``fourth statistical ensemble'' in \cite{navez}.  The formalism defined there soon become a major tool for the determination of fluctuations in the canonical and microcanonical ensembles for a large class of potentials \cite{gh2, ww}. For detailed surveys of the above line of work and further references, see e.g.~\cite{beau,  buffet, fannes, gajda, holthaus, jaeck, leggett, lewis, kocharovsky,  politzer96,  pule, ziff}. 

In the next section, we present a theorem that gives a mathematically rigorous solution to the problem of understanding the fluctuations of the condensate fraction in the canonical ensemble for trapped non-interacting Bose gases. The main differences between the large body of literature cited above and our work are that (a) we have theorems with rigorous proofs, and (b) our theorems work for more general potentials than the ones considered in prior work. An extension of our work to the microcanonical ensemble is the topic of a future paper, soon to be finished. 

On the rigorous side, there is a body of work relating fine properties of Bose-Einstein condensation with those of random partitions, possibly beginning with  a paper of Vershik \cite{vershik}. This approach is particularly suitable for studying Bose-Einstein condensation under mean-field interactions. Some of the notable papers in this direction are \cite{adams, dvz, suto, bcmp}. Among other things, there are some rigorous results about fluctuations of the condensate in these papers.  This is a more challenging and ambitious direction than what we consider in this paper since it involves interactions.  However the non-interacting system is still important for certain conceptual purposes. See \cite{politzer96} and  \cite{dalfovo} for details. An accessible introduction to the ideal Bose gas and its condensation is in Krauth \cite[Chapter 4]{krauth}.

\section{Results}
Consider a system of $n$ non-interacting bosons of mass $m$ in a potential~$V$. Suppose that the potential $V$ is such that the Schr\"odinger operator 
\begin{equation}\label{schro}
\hat{H} = -\frac{\hbar^2}{2m}\Delta + V
\end{equation}
has a discrete spectrum $E_0< E_1\le E_2\le\cdots$. These are the possible values of the energy of a single particle. Note that we have assumed that the ground state energy $E_0$ is strictly less than $E_1$, but the other inequalities are not strict. The strict inequality $E_0 < E_1$ is another way of saying that the ground state is unique. 

Incidentally, if the energy levels are allowed to depend on $n$, and in particular if the gap $E_1-E_0$ decays to zero as $n$ goes to infinity, the situation becomes mathematically more challenging. See Lieb et al.~\cite{lieb} for details. But we will not worry about this scenario here.


A {\it configuration} describing the state of the system is a sequence of the form $\bn = (n_0, n_1,\ldots)$ where $n_0, n_1,\ldots$ are non-negative integers summing to $n$. Here $n_j$ stands for the number of particles in energy eigenstate $j$. Then the energy of a configuration $\bn$ is 
\[
H(\bn) = \sum_{j=0}^\infty n_j E_j. 
\]
The {\it canonical Gibbs measure} is the probability measure on the space of configurations that puts mass proportional to $e^{-\beta H(\bn)}$ at $\bn$. Here 
\[
\beta = \frac{1}{k_BT}\ , 
\]
where $T$ is the temperature and $k_B$ is Boltzmann's constant. Call $\beta$ the `inverse temperature'.

Suppose that there exist constants $L > 0$ and $\alpha\ge  1$  such that 
\begin{equation}\label{weyl}
\lim_{\lambda \ra \infty} \frac{\#\{j : E_j \le \lambda\}}{L\lambda^\alpha} = 1. 
\end{equation}
It will follow from the theorems stated below that the numbers $L$ and $\alpha$ are sufficient to determine the size of the condensate fraction as well as its fluctuations in the limit as $n \ra\infty$. No other feature of the energy spectrum is relevant. If $\alpha > 1$ define
\[
t_c := \frac{1}{k_B(L\alpha \Gamma(\alpha)\zeta(\alpha))^{1/\alpha}}
\]
where $\Gamma$ and $\zeta$ are the classical Gamma and Zeta functions. If $\alpha = 1$ define
\[
t_c := \frac{1}{k_BL}. 
\]
Fix any $t > 0$. 
Let $(N_0, N_1,\ldots)$ be a random configuration drawn from the canonical Gibbs measure for a system of $n$ bosons at the temperature 
\[
T = 
\begin{cases}
tn^{1/\alpha} & \text{ if } \alpha > 1,\\
\frac{tn}{\log n} &\text{ if } \alpha = 1.
\end{cases}
\]
The number $T_c$ is defined as the value of $T$ when $t=t_c$. This will be called the `transition temperature' for the system of $n$ particles. Note that $T/T_c = t/t_c$. Our first theorem gives the limiting value of the condensate fraction at temperature $T$ when $T$ is below the transition temperature.
\begin{thm}\label{wlln}
Suppose that $t< t_c$. Then as $n \ra\infty$, $N_0/n$ converges in probability to $1-(t/t_c)^\alpha$. 
\end{thm}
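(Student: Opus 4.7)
The plan is to reduce the canonical measure on the excited occupations to the conditional distribution of independent geometric random variables, and then apply the weak law of large numbers to that simpler model.

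\textbf{Reformulation.} A constant shift of the energies does not change the Gibbs measure, so assume without loss of generality that $E_0 = 0$. Using $\sum_j N_j = n$, the canonical weight becomes $\prod_{j\ge 1} e^{-\beta n_j E_j}$, independent of $n_0$; marginalizing over $N_0 = n - \sum_{j\ge 1} N_j$, the excited-state marginal is
\[
P(N_1 = n_1, N_2 = n_2, \ldots) \propto \prod_{j\ge 1} e^{-\beta n_j E_j}
\quad\text{on}\quad \biggl\{\sum_{j\ge 1} n_j \le n\biggr\}.
\]
Letting $\tilde N_j \sim \mathrm{Geom}(1 - e^{-\beta E_j})$ for $j \ge 1$ be independent, this is exactly the law of $(\tilde N_1, \tilde N_2, \ldots)$ conditioned on $\mathcal A_n := \{\sum_{j \ge 1}\tilde N_j \le n\}$. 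It then suffices to show $n^{-1}\sum_{j \ge 1}\tilde N_j \to (t/t_c)^\alpha$ in probability and $P(\mathcal A_n) \to 1$.

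\textbf{Mean and variance via Weyl's law.} I would estimate $\mu_n := \ee\sum_{j\ge 1}\tilde N_j = \sum_{j\ge 1}(e^{\beta E_j}-1)^{-1}$ by approximating the sum, via \eqref{weyl}, with $L\alpha\int_0^\infty \lambda^{\alpha-1}(e^{\beta\lambda}-1)^{-1}d\lambda$. The substitution $u=\beta\lambda$ and the identity $\int_0^\infty u^{\alpha-1}(e^u-1)^{-1}du=\Gamma(\alpha)\zeta(\alpha)$ give, for $\alpha>1$,
\[
\mu_n \sim L\alpha\Gamma(\alpha)\zeta(\alpha)(k_BT)^\alpha = (t/t_c)^\alpha n,
\]
directly from the definitions of $T=tn^{1/\alpha}$ and $t_c$. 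For $\alpha=1$ the integral diverges at $0$; with the cutoff $\lambda \ge E_1$ one gets $\mu_n \sim L\beta^{-1}\log(1/\beta)$, and the scaling $T = tn/\log n$ together with $t_c = 1/(k_BL)$ turns this into $\mu_n \sim (t/t_c)n$. An entirely parallel calculation for $\sigma_n^2 := \var(\sum_{j\ge 1}\tilde N_j) = \sum_{j\ge 1}e^{\beta E_j}/(e^{\beta E_j}-1)^2$ shows $\sigma_n^2 = o(n^2)$ in every case; the small-$u$ singularity $u^{\alpha-3}$ is regularized by the cutoff $\beta E_1 > 0$, which is where the uniqueness hypothesis $E_0 < E_1$ is used.

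\textbf{Conclusion.} Chebyshev's inequality yields $n^{-1}\sum_{j \ge 1}\tilde N_j \to (t/t_c)^\alpha$ in probability. Since $t < t_c$ this limit is strictly less than $1$, so $P(\mathcal A_n) \to 1$; conditioning on an event of probability tending to one preserves convergence in probability, and therefore $N_0/n = 1 - n^{-1}\sum_{j\ge 1}N_j \to 1 - (t/t_c)^\alpha$ in probability.

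The main obstacle is making these Weyl-type asymptotics rigorous starting from the weak hypothesis \eqref{weyl} alone, uniformly in $\beta$ and especially near the bottom of the spectrum; a Stieltjes integration-by-parts together with the gap $E_1 > E_0$ should do the job. The boundary case $\alpha = 1$ is the most delicate, since it requires tracking the secondary logarithmic factor in $T$ both to identify the correct $t_c = 1/(k_BL)$ and to confirm $\sigma_n^2 = o(n^2)$.
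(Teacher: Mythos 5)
Your proposal follows essentially the same route as the paper: you represent the canonical ensemble as independent geometric occupation numbers conditioned on their total being at most $n$ (the paper's Lemma \ref{cond}), estimate the unconditional mean and variance from the Weyl-type density hypothesis \eqref{weyl} (Lemmas \ref{betalmm}, \ref{wllnlemma}, \ref{wlln1}), and conclude by Chebyshev together with the observation that conditioning on an event of probability tending to one preserves convergence in probability. The rigorous version of the Weyl approximation you flag as the main obstacle is exactly what the paper's Riemann-sum Lemma \ref{betalmm} supplies, including the spectral-gap bound \eqref{ejbd} used to control the variance, and the extra logarithmic bookkeeping at $\alpha=1$ that you correctly identify as the delicate case.
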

Theorem \ref{wlln} shows that indeed, the  `critical' or `transition' temperature for the appearance of the condensate is  $T_c$, since the condensate fraction tends to zero as $T$ approaches $T_c$. While this formula for the size of the condensate fraction is well known for specific potentials (see e.g.~\cite{pethicksmith}), the general formula for arbitrary potentials  in terms of the coefficients $L$ and $\alpha$ is possibly a new result.

Figure \ref{frac} illustrates the graphs of the limiting condensate fraction versus the temperature for three different values of $\alpha$, corresponding to three interesting potentials. 

\begin{figure}[t!]
\centering
\includegraphics[scale=0.3, trim=0in 0in 0in 0in, clip=true]{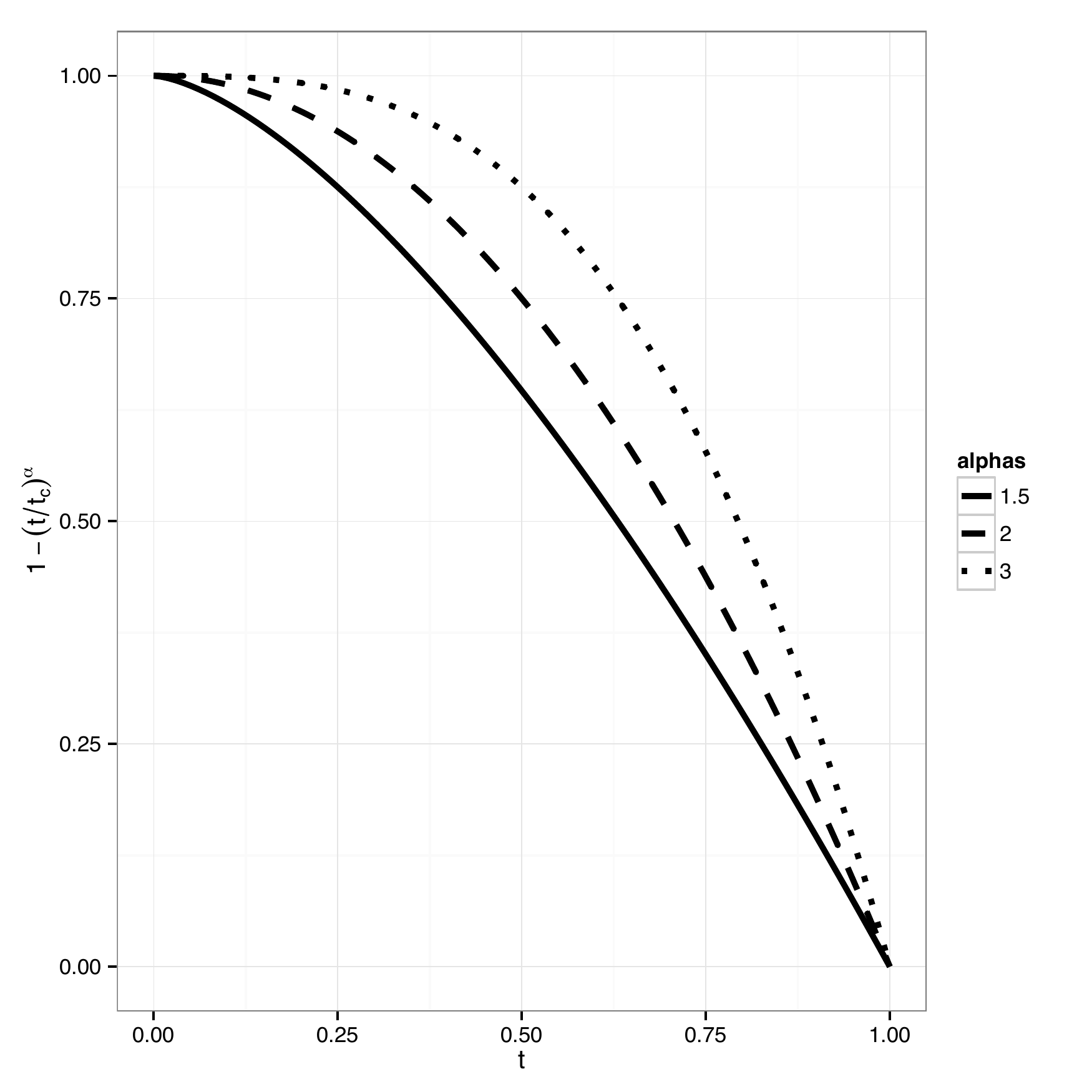}
\caption{Graphs of the condensate fraction versus the temperature for $\alpha=1.5$, $2$ and $3$. The three cases correspond to the uniform Bose gas in a box, the Bose gas in a 2D harmonic trap, and the Bose gas in a 3D harmonic trap, respectively. Units are adjusted such that $t_c=1$ in all three cases. (Courtesy: Susan Holmes.)}
\label{frac}
\end{figure}

Our second theorem gives the limiting distribution of the condensate fraction upon proper centering and scaling, in the temperature regime where the condensate appears. This is the main result of this paper. In the following, $\stackrel{d}{\longrightarrow}$ will stand for convergence in distribution and ${\mathcal N}(\mu,\sigma^2)$ will denote the normal distribution with mean $\mu$ and variance $\sigma^2$. Let $X_1, X_2,\ldots$ be i.i.d.\ exponential random variables with mean $1$, and define 
\begin{equation}\label{vdef}
W:= \frac{1}{(L\alpha \Gamma(\alpha)\zeta(\alpha))^{1/\alpha}}\sum_{j=1}^\infty \frac{1-X_j}{E_j-E_0}\  \ \text{ if } \ \alpha >1,
\end{equation}
and 
\begin{equation}\label{vdef2}
W:= \frac{1}{L}\sum_{j=1}^\infty \frac{1-X_j}{E_j-E_0}\  \ \text{ if } \ \alpha =1,
\end{equation}
provided that the infinite series converges almost surely. If the series does not converge almost surely, then $W$ is undefined. 
\begin{thm}\label{clt}
Suppose that $t< t_c$. If $\alpha = 1$, then the infinite series in \eqref{vdef2} converges almost surely and in $L^2$, and as $n \ra \infty$, 
\[
\frac{N_0-\ee(N_0)}{n/\log n} \stackrel{d}{\longrightarrow} (t/t_c) W. 
\]
If $1< \alpha <2$, then the infinite series in \eqref{vdef} converges almost surely and in $L^2$, and as $n \ra \infty$, 
\[
\frac{N_0-\ee(N_0)}{n^{1/\alpha}} \stackrel{d}{\longrightarrow} (t/t_c) W. 
\]
If $\alpha = 2$, then 
\[
\frac{N_0-\ee(N_0)}{\sqrt{n\log n}} \stackrel{d}{\longrightarrow} {\mathcal N}(0, 3(t/t_c)^2/\pi^2).
\]
If $\alpha > 2$, then
\[
\frac{N_0-\ee(N_0)}{\sqrt{n}} \stackrel{d}{\longrightarrow} {\mathcal N}(0, (t/t_c)^\alpha \zeta(\alpha-1)/\zeta(\alpha)).
\]
\end{thm}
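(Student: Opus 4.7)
For each $j \geq 1$ let $M_j$ be independent geometric random variables with $\pp(M_j = k) = (1-p_j)p_j^k$, where $p_j := e^{-\beta(E_j - E_0)}$, and set $S := \sum_{j\geq 1} M_j$. A direct comparison of densities shows that the canonical law of $(N_1,N_2,\ldots)$ (with $N_0 = n - \sum_{j\geq 1}N_j$) coincides with the conditional law of $(M_1,M_2,\ldots)$ given $\{S \leq n\}$; in particular $n - N_0$ has the same distribution as $S$ conditioned on $\{S \leq n\}$. The sum $\ee[S] = \sum_{j\geq 1}(e^{\beta(E_j - E_0)}-1)^{-1}$ can be approximated by an integral weighted by the density of states $L\alpha\lambda^{\alpha-1}$ coming from \eqref{weyl}, giving $\ee[S] = L\alpha\Gamma(\alpha)\zeta(\alpha)\beta^{-\alpha} + o(n) = (t/t_c)^\alpha n + o(n)$. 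Analogous estimates for $\var(S) = \sum_{j\geq 1}p_j/(1-p_j)^2$ yield the four scalings predicted by the theorem: $\var(S) \sim (t/t_c)^\alpha\zeta(\alpha-1) n/\zeta(\alpha)$ for $\alpha > 2$; $\var(S) \sim 3(t/t_c)^2 n\log n/\pi^2$ for $\alpha = 2$; and $\var(S) \sim (k_Bt)^2 \beta^{-2}\sum_{j\geq 1}(E_j - E_0)^{-2}$ for $1 \leq \alpha < 2$, where the last sum is finite because \eqref{weyl} implies $E_j \sim (j/L)^{1/\alpha}$ and $\sum j^{-2/\alpha} < \infty$ exactly when $\alpha < 2$.

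\textbf{Limit laws for the unconditioned sum.} For $\alpha \geq 2$ I would apply the Lindeberg-Feller CLT to the triangular array $\{(M_j - \ee M_j)/\sqrt{\var(S)}\}_{j\geq 1}$: by \eqref{weyl}, the variance is spread essentially uniformly over the $\sim\beta^{-\alpha}$ modes with $\beta(E_j - E_0) \lesssim 1$, and the Lindeberg condition then reduces to an elementary third-moment bound on the centered geometric. For $\alpha < 2$ I would instead prove a term-by-term limit: since $\beta\to 0$ forces $p_j \to 1$, one has $(1-p_j)M_j \stackrel{d}{\longrightarrow}\mathrm{Exp}(1)$, so $(M_j - \ee M_j)/a_n \stackrel{d}{\longrightarrow} k_B t\,(X_j - 1)/(E_j - E_0)$ for each fixed $j$, where $a_n$ is the scaling in the theorem. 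Joint convergence of finitely many coordinates, combined with an $L^2$ tail estimate drawn from the finite sum $\sum (E_j - E_0)^{-2}$, yields almost-sure and $L^2$ convergence of the series defining $W$ and the desired limit $(N_0 - \ee N_0)/a_n \stackrel{d}{\longrightarrow} (t/t_c)W$.

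\textbf{Removing the conditioning and expected obstacles.} Since $n - \ee[S] = (1-(t/t_c)^\alpha)n + o(n)$ grows like $n$ while $\sqrt{\var(S)}/n \to 0$ in every regime, Chebyshev's inequality gives $\pp(S > n) \to 0$; thus the conditional law of $S$ given $\{S\leq n\}$ has the same weak limit (after centering and scaling) as the unconditional law, completing the proof. The main obstacle I anticipate is the $\alpha = 2$ case, where the variance integral is only marginally convergent: the constant $3/\pi^2$ must be extracted carefully from a cutoff that depends on $\beta E_1$, and the Lindeberg condition requires uniform control over all $j$, not only those with $\beta(E_j - E_0) \lesssim 1$. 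A secondary difficulty is making the term-by-term geometric-to-exponential approximation in the $\alpha < 2$ regime rigorous in a way that commutes with the infinite sum; this I would handle by a standard triangular-array truncation argument based on the $L^2$ tail bound from the moment computation.
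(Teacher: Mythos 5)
Your reduction via the conditioned independent-geometrics representation is exactly the paper's Lemma~\ref{cond} (their $Z_j$, $M$ are your $M_j$, $S$), and the removal of the conditioning via $\pp(S>n)\to 0$ matches the paper's treatment. Where you diverge is in how the limit law of the unconditioned sum is established. The paper runs a single method across all four regimes: it writes out the characteristic function of $\beta(M-\ee M)$ (or $\beta^{\alpha/2}(M-\ee M)$, or $\gamma(M-\ee M)$ with $\gamma=\beta/\sqrt{\log(1/\beta)}$) as an infinite product of geometric characteristic functions, expands the logarithm in a double power series, and passes to the limit using Lemma~\ref{betalmm} plus dominated convergence. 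You instead propose Lindeberg--Feller for $\alpha\ge 2$ and a term-by-term geometric-to-exponential limit with an $L^2$ truncation for $\alpha<2$. Both routes are sound, but they trade differently: your term-by-term argument for $\alpha<2$ is arguably more transparent than the characteristic-function computation (and makes the a.s.\ and $L^2$ convergence of the series defining $W$ visible at the same stroke, as the paper also does via a bounded martingale), while your Lindeberg route for $\alpha\ge 2$ requires more care than you suggest.

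One caveat on the $\alpha\ge 2$ side: the phrase ``reduces to an elementary third-moment bound on the centered geometric'' is optimistic. A Lyapunov bound with exponent $3$ needs $\sum_j(E_j-E_0)^{-3}<\infty$, which fails as soon as $\alpha\ge 3$ (since $E_j\asymp j^{1/\alpha}$). To verify Lindeberg for all $\alpha\ge 2$ you must instead exploit the exponential tail of each geometric directly: since $\sqrt{\var(S)}\cdot(1-p_j)\gtrsim \beta E_1\cdot\beta^{-\alpha/2}\to\infty$ (and $\gtrsim\sqrt{\log(1/\beta)}\to\infty$ when $\alpha=2$), the truncated second moments decay like $\exp(-c\beta^{1-\alpha/2})$ and the Lindeberg sum vanishes --- but that is a tail estimate, not a fixed-moment Lyapunov check. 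The paper's characteristic-function approach avoids this issue entirely because it never isolates individual summands; the estimate $|e^{\I x}-1-\I x|\le x^2/2$ applied inside the double series does all the work. For $\alpha=2$ your instinct that the marginal convergence makes the constant $3/\pi^2$ delicate is right; the paper handles it by cutting the $k$-sum at $\eta(\beta)=\beta^{-1}(\log(1/\beta))^{-1/4}$ and using a $\max_{k\le\eta}|k^2 b_k(\beta)-2L|\to 0$ lemma, which is the analogue of the uniform control you would need on the Lindeberg side. In short: correct, same skeleton, genuinely different engine for the limit theorem, with your version needing a more careful Lindeberg verification than your sketch indicates.
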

In spite of the wealth of literature on the non-interacting Bose gas, the above theorem appears to be a genuinely new result. As applications of Theorem \ref{clt}, consider several special cases which demonstrate the four situations of Theorem \ref{clt}. All are real, interesting examples considered in the physics literature. 
\begin{enumerate}[(i)]
\item {\bf 3D harmonic trap.} The potential function is 
\[
V(x,y,z)= \frac{m\omega^2}{2}(x^2+y^2+z^2) 
\]
where $\omega$ is some positive constant. 
It is well known (see e.g.\ \cite{dalfovo}) that  the eigenvalues of the Schr\"odinger operator \eqref{schro} for this potential are  exactly
\[
\biggl(i+j+k - \frac{3}{2}\biggr)\hbar \omega,
\]
where $i,j,k$ range over all non-negative integers. From this it is easy to see that $L= 1/(6\hbar^3\omega^3)$ and $\alpha = 3$ here. Consequently, the fluctuations of the condensate fraction are of order $n^{-1/2}$ and have a limiting normal distribution upon suitable centering and scaling. 

\item {\bf 3D cubical box.} There is a $K$ such that $V(x,y,z) = 0$ if $(x,y,z)\in [0,K]^3$ and $=\infty$ otherwise. It is not difficult to verify that the energy levels of the Schr\"odinger operator for this potential are of the form $Ci^2+Cj^2+Ck^2$ as $i,j,k$ range over non-negative integers, where $C$ is some constant depending on $K$ and $\hbar$. This shows that $L=4\pi/3C^{3/2}$ and  $\alpha = 3/2$ for the uniform gas and therefore, by Theorem \ref{clt},  the fluctuations of the condensate fraction are of order $n^{-1/3}$ and the limiting distribution is non-normal. A histogram of 10,000 simulated values of $W$ for this potential is given in Figure~\ref{histo}. 

\begin{figure}[t!]
\centering
\includegraphics[scale=0.2, trim=0in 0in 0in 0in, clip=true]{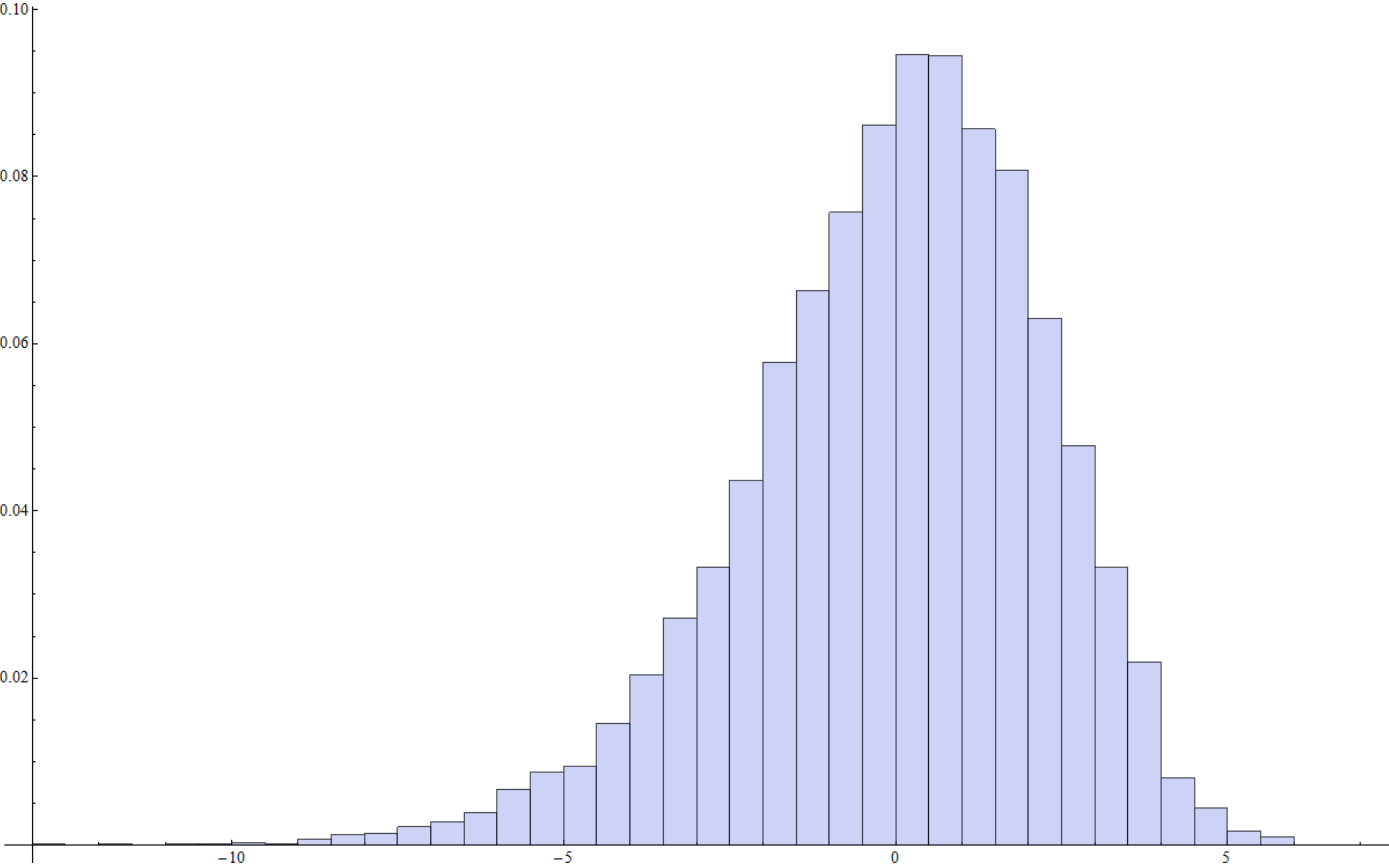}
\caption{Histogram of 10,000 simulated values of $W$ for the uniform Bose gas with energy levels $i^2+j^2+k^2$, with $i$, $j$ and $k$ ranging over non-negative integers. (Courtesy: James Zhao)}
\label{histo}
\end{figure}

\item {\bf 2D harmonic trap.} Just as in the 3D case, the eigenvalues are of the form $Ci+Cj$ as $i,j$ range over non-negative integers and $C$ is some constant. Clearly, $L=1/2C^2$ and $\alpha=2$ for this potential, and therefore the fluctuations of the condensate fraction are of order $n^{-1/2}(\log n)^{1/2}$ and  normally distributed in the limit. 

\item {\bf 1D harmonic trap.} The eigenvalues are of the form $Cj$ as $j$ ranges over non-negative integers and $C$ is some constant. Clearly, here $\alpha = 1$ and $L=C$. Therefore the fluctuations of the condensate fraction are of order $1/\log n$ and the limiting distribution is non-normal. 

\item {\bf 2D square box.} The eigenvalues are of the form $Ci^2+Cj^2$ where $i$ and $j$ range over non-negative integers are $C$ is some constant. In this case, an easy computation gives $\alpha =1$ and $L=\pi/4C$. The fluctuations of the condensate fraction are of order $1/\log n$ and the limiting distribution is non-normal. 

\end{enumerate}
For a general potential, the constants $L$ and $\alpha$ may often be obtained with the aid of Weyl's law for eigenvalues of Schr\"odinger operators, without going through the trouble of actually diagonalizing the operators. For details, see e.g.~\cite[Chapter 6]{zworski}. 

Our next theorem  gives the limiting distributions of the other cell counts, in the temperature regime where the condensate appears. For $N_1, N_2,\ldots$, the fluctuations are of the same size as their expected values; in other words, these cell counts are not concentrated. This result is a byproduct of the proof of Theorem \ref{clt}. 
\begin{thm}\label{other}
Suppose that $t< t_c$ and $\alpha >1$. Then for any fixed $j\ge 1$, as $n$ tends to infinity, the limiting distribution of $N_j/n^{1/\alpha}$ is exponential with mean $k_B t/(E_j-E_0)$. When $\alpha=1$, the same result holds if $n^{1/\alpha}$ is changed to $n/\log n$. 
\end{thm}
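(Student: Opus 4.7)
The plan is to exploit the standard representation of the canonical Gibbs measure on the excited-state occupation vector $(N_1, N_2, \ldots)$ as a product of independent geometric random variables conditioned on a single linear constraint---a representation that will already be in hand from the proof of Theorem \ref{clt}. Factoring $e^{-\beta n E_0}$ out of the Boltzmann weight, the marginal law of $(N_1, N_2, \ldots)$ under the canonical ensemble is proportional to $\prod_{j \geq 1} e^{-\beta (E_j - E_0) n_j}$, restricted to $\{\sum_{j \geq 1} n_j \leq n\}$. Let $Q$ denote the unconstrained product measure; under $Q$, the $N_j$ ($j \geq 1$) are independent, with $N_j$ geometric of parameter $p_j := 1 - e^{-\beta(E_j - E_0)}$, and the canonical law of $(N_1, N_2, \ldots)$ is precisely $Q$ conditioned on $S := \sum_{j \geq 1} N_j \leq n$.

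The first step is to verify $Q(S \leq n) \to 1$. A Weyl-law computation based on \eqref{weyl} (essentially the one driving Theorem \ref{wlln}) gives
\[
\ee_Q[S] = \sum_{j \geq 1} \frac{1}{e^{\beta(E_j-E_0)}-1} = (1+o(1))\,(t/t_c)^\alpha\, n,
\]
which is strictly less than $n$ in the limit because $t < t_c$. A second Weyl-law estimate bounds $\var_Q(S) = \sum_{j\geq 1}(1-p_j)/p_j^2$ by $O(n^{2/\alpha})$ when $1 < \alpha < 2$, by $O(n \log n)$ when $\alpha = 2$, by $O(n)$ when $\alpha > 2$, and by $O((n/\log n)^2)$ when $\alpha = 1$; in every case this is $o(n^2)$, so Chebyshev's inequality immediately yields $Q(S > n) \to 0$.

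The second step transfers this to the rescaled marginals. Fix $j \geq 1$ and $x > 0$, and set $a_n := n^{1/\alpha}$ if $\alpha > 1$, $a_n := n/\log n$ if $\alpha = 1$. For the event $A_n := \{N_j \leq x a_n\}$, the elementary conditioning bound $|Q(A_n \mid S \leq n) - Q(A_n)| \leq Q(S > n)/Q(S \leq n) \to 0$ shows that $N_j$ has the same limiting distribution under the canonical ensemble as under $Q$. Under $Q$, $N_j$ is geometric with parameter $p_j = (1+o(1))\beta(E_j - E_0) = (1+o(1))(E_j - E_0)/(k_B t\, a_n)$, so the standard fact that $pN \Rightarrow \text{Exp}(1)$ when $N \sim \text{Geom}(p)$ and $p \to 0$ delivers $N_j/a_n \Rightarrow \text{Exp}(k_B t/(E_j - E_0))$.

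The only real work is the two Weyl-law asymptotics in the first step, both of which I expect to be essentially recycled from the proof of Theorem \ref{clt}: the mean asymptotic is what establishes Theorem \ref{wlln}, and the variance bounds sit behind the normalizations $n^{1/\alpha}$, $\sqrt{n \log n}$, $\sqrt{n}$, and $n/\log n$ appearing in the four cases there. Because the conditioning event $\{S \leq n\}$ has probability tending to $1$ rather than merely being bounded below, no local limit theorem is required, which is precisely what makes Theorem \ref{other} a clean byproduct of Theorem \ref{clt}.
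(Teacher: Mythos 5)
Your proposal is correct and follows essentially the same route as the paper: both represent the canonical law as independent geometrics $(Z_j)_{j\ge 1}$ conditioned on $\{\sum_{j\ge 1} Z_j \le n\}$, both verify that the conditioning event has probability tending to $1$ (the paper cites Lemmas \ref{wllnlemma} and \ref{wlln1}, which contain precisely the mean and variance asymptotics you recompute), and both finish with the elementary geometric-to-exponential convergence for the fixed coordinate $N_j$. The only cosmetic difference is that you spell out the conditioning inequality and the Chebyshev variance bound explicitly, whereas the paper invokes the already-established weak law.
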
 
It is natural to ask questions about the behavior of the random variable $W$ in Theorem \ref{clt}. Figure \ref{histo} already gives an indication that the distribution of $W$ is non-symmetric around zero. The following theorem shows that the left and right tails of $W$ indeed behave differently. In particular, it shows that if the $E_j$'s satisfy \eqref{weyl} with $\alpha >1$, then the right tail of $W$ falls off as $\exp(-cx^{1+1/(\alpha-1)})$ and the left tail falls off exponentially. For example, for the 3D uniform Bose gas in a box (depicted in Figure \ref{histo}), $\alpha = 3/2$ and so the upper tail of $W$ behaves as~$\exp(-cx^3)$. When $\alpha=1$, the right tail falls off double exponentially. 
\begin{thm}\label{wtail}
Let $W$ be defined as in \eqref{vdef} and assume, for simplicity, that the constant in front of the sum is $1$. Suppose that $\sum_{j=1}^\infty 1/(E_j-E_0)^2< \infty$. Given $x\in (0,\infty)$, choose $n_x$ so that $\sum_{j\le n_x} 1/(E_j-E_0) \le x/2$. Then 
\begin{equation}\label{wx1}
\pp(W\ge x) \le \exp\biggl(-\frac{x^2}{8\sum_{j> n_x} 1/(E_j-E_0)^2}\biggr). 
\end{equation}
If $n_x'$ is such that $\sum_{j\le n_x'} 1/(E_j-E_0) \ge 2x$, then 
\begin{equation}\label{wx2}
\pp(W\ge x) \ge 2^{-22} \exp\biggl(-\frac{120 x^2}{\sum_{j> n_x'} 1/(E_j-E_0)^2}\biggr). 
\end{equation}
In particular, if the $E_j$'s satisfy \eqref{weyl}, then there are constants $a_1$, $a_2$, $a_3$, $a_4$, $a_5$ and $a_6$ depending only on the $E_j$'s such that for all $x> 0$,
\begin{align}
a_1 e^{-a_2 x^{1+1/(\alpha-1)}} &\le \pp(W\ge x) \le a_3 e^{-a_4 x^{1+1/(\alpha-1)}} \ \ \text{ if }  \ 1<\alpha < 2, \ \text{ and} \label{a1a2}\\
a_1 e^{-a_2e^{a_3x}} &\le \pp(W\ge x) \le a_4 e^{-a_5 e^{a_6x}} \ \ \text{ if } \  \alpha=1. \label{a1a22}
\end{align}
Lastly, there are constants $c_1, c_2 > 0$ such that for all $x> 0$,
\begin{equation}\label{wx3}
\pp(W\le -x) \le c_1 e^{-c_2 x},
\end{equation}
and the right hand side cannot be improved to $c_1 e^{-c_2 x^{1+\ep}}$ for any $\ep >0$. 
\end{thm}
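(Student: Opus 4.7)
Setting $d_j := 1/(E_j - E_0)$ so that $W = \sum_j d_j(1 - X_j)$ is a sum of independent mean-zero random variables of total variance $\sigma^2 = \sum_j d_j^2$, with per-summand moment generating function $\ee e^{s d_j(1-X_j)} = e^{sd_j}/(1 + sd_j)$ on $s > -1/d_j$, the plan is to reduce every one of the four tail bounds to a Chernoff/tilting computation. The truncations at $n_x$ and $n_x'$ are chosen precisely to isolate the ``large-atom'' portion of $W$ (where the quadratic bound on the cumulant generating function is loose) from the ``small-atom'' portion (where it is essentially tight). The two elementary inequalities that do all the work are $u - \log(1+u) \le u^2/2$ for every $u \ge 0$ and $-u - \log(1-u) \le u^2$ for $u \in [0, 1/2]$, both of which follow from the bound $|f''(u)| \le 1$ (resp.\ $\le 2$) on the relevant range.

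For \eqref{wx1}, split $W = W^{\le} + W^{>}$ at $n_x$. Since $W^{\le} = \sum_{j \le n_x} d_j - \sum_{j \le n_x} d_j X_j \le \sum_{j \le n_x} d_j \le x/2$ deterministically, $\{W \ge x\} \subseteq \{W^{>} \ge x/2\}$; the quadratic cumulant bound gives $\log \ee e^{s W^{>}} \le (s^2/2)\sum_{j > n_x} d_j^2$ for every $s > 0$, and optimizing at $s = x/(2\sum_{j > n_x} d_j^2)$ yields exactly the claimed $\exp(-x^2/(8\sum_{j > n_x} d_j^2))$. For \eqref{wx3}, apply Chernoff to $-W$ at $s_0 := 1/(2d_1)$, which guarantees $s_0 d_j \le 1/2$ for every $j$ (since $d_j$ is decreasing); the second quadratic bound gives $\log \ee e^{-s_0 W} \le s_0^2 \sigma^2$, so $\pp(W \le -x) \le \exp(s_0^2\sigma^2 - s_0 x)$, which is of the claimed exponential form. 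For the tightness statement, the single-coordinate event $\{X_1 \ge (x + M)/d_1\}$ has probability $e^{-(x + M)/d_1}$, and intersecting it with the constant-probability event $\{|\sum_{j \ge 2} d_j(1 - X_j)| \le M/2\}$ (for $M$ a large multiple of the tail standard deviation) forces $W \le -x$, ruling out any improvement of the form $\exp(-c x^{1 + \ep})$.

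The principal obstacle is the lower bound \eqref{wx2}. Split $W = W_1 + W_2$ at $n_x'$; by independence $\pp(W \ge x) \ge \pp(W_1 \ge x/2)\,\pp(W_2 \ge x/2)$. Since $\ee W_1 = \sum_{j \le n_x'} d_j \ge 2x$, Chebyshev gives $\pp(W_1 \ge x/2) \ge 1 - 4\sigma^2/(9x^2)$, which is bounded below by an absolute positive constant once $x^2 \gtrsim \sigma^2$; for smaller $x$ a Paley-Zygmund argument on $W_1 \mathbf{1}_{\{W_1 \ge 0\}}$ provides a uniform substitute. The real work is lower-bounding the second factor by the Gaussian-type quantity $\exp(-c x^2/\sum_{j > n_x'} d_j^2)$. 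I would do this by exponential tilting of $W_2$: let $s > 0$ solve $\psi_2'(s) = x/2 + 2\sqrt{\psi_2''(s)}$ for $\psi_2(s) := \sum_{j > n_x'}(sd_j - \log(1+sd_j))$; then Chebyshev anti-concentration under the tilted law gives $\pp^{(s)}(W_2 \in [x/2,\, x/2 + 4\sqrt{\psi_2''(s)}]) \ge 3/4$, and the Radon-Nikodym identity
\[
\pp(W_2 \ge x/2) \ge \tfrac{3}{4}\exp\bigl(\psi_2(s) - s(x/2 + 4\sqrt{\psi_2''(s)})\bigr)
\]
together with the bounds $\psi_2(s) \le (s^2/2)\sum_{j > n_x'} d_j^2$, $\psi_2''(s) \le \sum_{j > n_x'} d_j^2$, and $\psi_2'(s) \le s\sum_{j > n_x'} d_j^2$ (which pins down $s$ to order $x/\sum_{j > n_x'} d_j^2$), produces the desired rate. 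The oddly specific constants $2^{-22}$ and $120$ in the statement simply absorb the accumulated slack at each step.

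Finally, \eqref{a1a2} and \eqref{a1a22} follow by plugging the Weyl asymptotic $d_j \asymp j^{-1/\alpha}$ (from \eqref{weyl}) into the partial sums. For $1 < \alpha < 2$, $\sum_{j \le N} d_j \asymp N^{1 - 1/\alpha}$ and $\sum_{j > N} d_j^2 \asymp N^{1 - 2/\alpha}$, so both $n_x$ and $n_x'$ are of order $x^{\alpha/(\alpha - 1)}$ and the corresponding tail variances are of order $x^{(\alpha - 2)/(\alpha - 1)}$, giving $x^2/\sum_{j > n_x} d_j^2 \asymp x^{1 + 1/(\alpha - 1)}$ in the exponents of \eqref{wx1} and \eqref{wx2}. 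For $\alpha = 1$, $\sum_{j \le N} d_j \asymp \log N$, so $n_x \asymp e^{cx}$ and $\sum_{j > n_x} d_j^2 \asymp e^{-cx}$, producing the double-exponential rate $\exp(-c\, e^{c' x})$ in \eqref{a1a22}.
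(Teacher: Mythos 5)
Your arguments for \eqref{wx1}, \eqref{wx3}, the unimprovability of \eqref{wx3}, and the passage from \eqref{wx1}--\eqref{wx2} to \eqref{a1a2}--\eqref{a1a22} are sound, and for the upper bound \eqref{wx1} the split-at-$n_x$-plus-Chernoff scheme is exactly what the paper has in mind (the paper itself only proves the four elementary bounds $e^u/(1+u)\le e^u$, $e^u/(1+u)\le e^{u^2/2}$, $e^u/(1+u)\ge e^{u/2}$ for $u\ge 3$, $e^u/(1+u)\ge e^{u^2/6}$ for $0\le u\le 3$ and then outsources the rest of \eqref{wx1}--\eqref{wx2} to Montgomery--Odlyzko's argument, word for word; your first elementary inequality is literally the paper's \eqref{p2}). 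For \eqref{wx3} the paper uses a cleaner trick for the ``no $x^{1+\ep}$'' claim --- a tail bound $c_1 e^{-c_2 x^{1+\ep}}$ would force the moment generating function $\phi(\lambda)=\ee e^{-\lambda W}$ to be finite for all $\lambda$, but $\phi(E_1)=\infty$ --- whereas your explicit ``make $X_1$ huge, keep the rest tight'' event gives the same conclusion by hand; both are fine.

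The argument for \eqref{wx2}, however, has a genuine gap. You write ``$\ee W_1=\sum_{j\le n_x'}d_j\ge 2x$,'' but $W_1=\sum_{j\le n_x'}d_j(1-X_j)$ has mean zero --- each summand $d_j(1-X_j)$ is centred. The quantity $\sum_{j\le n_x'}d_j$ is the deterministic \emph{upper bound} on $W_1$, not its mean, and the whole point of the $n_x'$ condition is that the truncated sum can in principle reach $2x$, not that it typically does. With $\ee W_1=0$, Chebyshev gives no lower bound at all on $\pp(W_1\ge x/2)$; for large $x$ this probability is itself exponentially small, so it cannot be ``bounded below by an absolute positive constant.'' Worse, the decomposition $\pp(W\ge x)\ge\pp(W_1\ge x/2)\pp(W_2\ge x/2)$ can fail outright: the theorem only requires $\sum_{j\le n_x'}d_j\ge 2x$, so when $\sum_j d_j<\infty$ one may take $n_x'$ so large that $\sum_{j>n_x'}d_j<x/2$, which forces $W_2<x/2$ deterministically and makes your second factor zero, while \eqref{wx2} still asserts a strictly positive bound. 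A correct route (the one Montgomery and Odlyzko take) is to tilt the \emph{entire} sum $W$ by an exponential change of measure, not just $W_2$: choose $s$ so the tilted mean $\psi'(s)$ exceeds $x$ by about one tilted standard deviation, use the anti-concentration you describe, and then bound $s x - \psi(s)$ and $s\sqrt{\psi''(s)}$ by $O(x^2/\sum_{j>n_x'}d_j^2)$ using the lower MGF bounds (the analogues of \eqref{p3} and \eqref{p4}, with the threshold $n_x'$ separating the ``big'' atoms where $s d_j\ge 3$ from the ``small'' ones); the constants $2^{-22}$ and $120$ in the statement are the artifacts of carrying this through explicitly. Your tilting computation for $W_2$ is the right ingredient, but it must be applied to $W$ itself.
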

Interestingly, there is one special case where the distribution of $W$ can be computed explicitly. This is the case of the one-dimensional harmonic trap, which has energy levels $0,1,2,3,\ldots$. This is closely connected with the behavior of random partitions of integers \cite{erdos41} and extreme value theory. 
\begin{prop}\label{gumbel}
If $E_j=j$ for $j=0,1,2,\ldots$, and $W$ is defined as in \eqref{vdef2} (with $L=1$), then for all $x\in \rr$
\[
\pp(W\ge x) = e^{-e^{x-\gamma}},
\]
where $\gamma$ is Euler's constant. 
\end{prop}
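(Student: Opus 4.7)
The plan is to identify $\gamma - W$ with the distributional limit of $\max(X_1,\ldots,X_n) - \log n$, which is classical Gumbel. Three ingredients are required: the harmonic-number expansion $\sum_{j=1}^n 1/j = \log n + \gamma + o(1)$, R\'enyi's representation of exponential order statistics, and the standard Gumbel limit for the maximum of i.i.d.\ unit exponentials.

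First I would rewrite $W$. With $E_j=j$ and $L=1$, \eqref{vdef2} becomes $W = \sum_{j=1}^\infty (1-X_j)/j$, and the partial sums split as $H_n - \sum_{j=1}^n X_j/j$ with $H_n = \log n + \gamma + o(1)$. The series for $W$ converges a.s.\ by Kolmogorov's theorem (the summands have mean zero and summable variances $1/j^2$), so rearranging yields
\[
\gamma - W \;=\; \lim_{n\to\infty}\Bigl(\textstyle\sum_{j=1}^n X_j/j - \log n\Bigr) \quad \text{a.s.}
\]
Next I invoke R\'enyi's representation: for i.i.d.\ unit exponentials $Z_1,\ldots,Z_n$, the top order statistic satisfies $\max(Z_1,\ldots,Z_n) = \sum_{i=1}^n Z'_i/(n-i+1)$ in distribution for another set of i.i.d.\ unit exponentials $Z'_i$. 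Reindexing $i \mapsto n-i+1$ and using the exchangeability of the $Z'_i$'s gives, for each fixed $n$,
\[
\sum_{k=1}^n \frac{X_k}{k} \;\stackrel{d}{=}\; \max(X_1,\ldots,X_n).
\]

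To finish, I combine the distributional identity above with the classical extreme-value limit $\max(X_1,\ldots,X_n) - \log n \stackrel{d}{\longrightarrow} G$, where $G$ has the standard Gumbel CDF $e^{-e^{-x}}$. The left-hand side of the distributional identity converges a.s.\ (hence in distribution) to $\gamma - W$, while the right-hand side converges in distribution to $G$, so $\gamma - W \stackrel{d}{=} G$. Solving for the survival function yields $\pp(W \ge x) = \pp(G \le \gamma - x) = e^{-e^{x-\gamma}}$, as claimed. The only nontrivial step is spotting the R\'enyi identity that turns $\sum X_k/k$ into a maximum of $n$ i.i.d.\ exponentials; every other step is a standard assembly of Kolmogorov's convergence criterion, harmonic-number asymptotics, and the exponential extreme-value theorem.
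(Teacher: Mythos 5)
Your proof is correct and rests on the same key idea as the paper's: the R\'enyi representation of exponential order statistics (equivalently, the independence and exponential distribution of the spacings), which identifies $\sum_{j=1}^n X_j/j$ in law with the maximum of $n$ i.i.d.\ unit exponentials. The only difference is cosmetic --- you invoke the classical Gumbel extreme-value limit as a black box, whereas the paper derives it in-line by writing down the exact CDF of the truncated sum $W_k$ and passing to the limit.
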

Our final theorem is a law of large numbers that records the correspondence between the total energy of the system and the temperature.
\begin{thm}\label{ellnthm}
Let $E_{tot} := \sum_{j=0}^\infty N_j E_j$ be the total energy of the system at temperature $T$, where $T=tn^{1/\alpha}$ if $\alpha >1$ and $T=tn/\log n$ if $\alpha =1$. Here $t$ is a fixed constant, strictly less than $t_c$. If $\alpha > 1$, then 
\[
\frac{E_{tot}}{T^{1+\alpha}} \ra k_B^{1+\alpha} L\alpha \Gamma(\alpha+1)\zeta(\alpha+1) \ \ \text{ in probability as $n\ra\infty$}. 
\]
If $\alpha = 1$, then 
\[
\frac{E_{tot}}{T^2} \ra \frac{k_B^{2} L \pi^2}{6} \ \ \text{ in probability as $n\ra\infty$}. 
\]
\end{thm}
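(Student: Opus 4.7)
The plan is to reduce the theorem to a Weyl-law calculation of the expected value and variance of $\sum_{j \geq 1} N_j(E_j - E_0)$. First, observe that $E_{tot} = nE_0 + \sum_{j \geq 1} N_j(E_j - E_0)$, and $nE_0/T^{1+\alpha} \to 0$ in either regime (since $1 + 1/\alpha > 1$ when $\alpha > 1$, and $n(\log n)^2/n^2 \to 0$ when $\alpha = 1$). So it suffices to prove convergence in probability of $S := \sum_{j \geq 1} N_j (E_j - E_0)$ divided by $T^{1+\alpha}$ to the stated constant.

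Next I would invoke the canonical-to-independent-geometric representation that is presumably set up in the proofs of Theorems \ref{clt} and \ref{other}: the joint law of $(N_1, N_2, \ldots)$ under the canonical ensemble equals the conditional law, given $\sum_{j \geq 1} G_j \leq n$, of independent geometric random variables $G_j$ with $\ee[G_j] = (e^{\beta(E_j-E_0)} - 1)^{-1}$. This is immediate from writing the canonical weight as $e^{-\beta n E_0}\prod_{j\ge 1}e^{-\beta N_j(E_j-E_0)}$ on the set $\sum_{j\ge 1}N_j\le n$. By Theorem \ref{wlln} the unconditional mean of $\sum_j G_j$ is $n(t/t_c)^\alpha + o(n)$, which is strictly less than $n$ since $t < t_c$, so the conditioning event has probability bounded below and conditional expectations are asymptotically equivalent to unconditional ones. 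It then suffices to compute $\ee$ and $\var$ of $\tilde S := \sum_{j \geq 1} G_j(E_j - E_0)$ under the product measure. Using \eqref{weyl} to pass from the sum to the integral $L\alpha \int_0^\infty \lambda^\alpha/(e^{\beta\lambda}-1)\,d\lambda$ and substituting $u = \beta\lambda$ with $\beta = (k_BT)^{-1}$,
\[
\ee[\tilde S] \sim L\alpha (k_BT)^{1+\alpha}\int_0^\infty \frac{u^\alpha}{e^u-1}\,du = L\alpha (k_BT)^{1+\alpha}\Gamma(\alpha+1)\zeta(\alpha+1),
\]
which matches the claimed constant for $\alpha > 1$. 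For $\alpha = 1$ the integral equals $\pi^2/6$, giving $Lk_B^2T^2\pi^2/6$.

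For concentration I would bound the variance. Since $\var(G_j) = e^{\beta(E_j - E_0)}/(e^{\beta(E_j - E_0)} - 1)^2$, independence and a parallel Weyl's law calculation give
\[
\var(\tilde S) = \sum_{j \geq 1}\frac{(E_j - E_0)^2 \, e^{\beta(E_j - E_0)}}{(e^{\beta(E_j - E_0)} - 1)^2} = O(T^{2+\alpha}),
\]
where convergence of the corresponding integral $\int_0^\infty u^{\alpha+1}e^u(e^u-1)^{-2}\,du$ at $u=0$ uses $\alpha \geq 1$ and at $u = \infty$ is automatic. The standard deviation is therefore of strictly smaller order than the mean $T^{1+\alpha}$, so Chebyshev's inequality yields convergence in probability of $\tilde S/T^{1+\alpha}$ to the claimed constant. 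The main technical obstacle is making the canonical-to-product-of-geometrics reduction and its error control rigorous — in particular, verifying that the conditioning on $\sum_j G_j \leq n$ leaves expectations of order $T^{1+\alpha}$ undisturbed to leading order — but this is precisely the machinery developed for Theorems \ref{clt} and \ref{other}, after which the present theorem reduces to standard Bose-Einstein integral asymptotics.
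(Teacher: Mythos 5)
Your proposal is correct and follows essentially the same route as the paper: Lemma \ref{cond} reduces the canonical ensemble to independent geometrics conditioned on $M = \sum_{j\ge 1}Z_j \le n$, Lemma \ref{betalmm} (the Weyl-law passage from sums over $E_j$ to integrals) supplies the mean and variance asymptotics that the paper packages as Lemma \ref{elln}, and Theorem \ref{wlln} gives $\pp(M\le n)\ra 1$, after which Chebyshev finishes. One small point worth tightening: you do not actually need (and it is not immediate) that conditional expectations match unconditional ones; it is enough that convergence in probability transfers under conditioning on an event whose probability tends to one, via $\pp(A\mid B)\le \pp(A)/\pp(B)$, which is all the paper uses.
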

The next section contains the proofs of the theorems presented in this section.

\section{Proofs}
Note that the canonical Gibbs measure and the distribution of $W$ remain the same if we add or subtract a fixed constant from each $E_j$. The constants $L$ and $\alpha$ are also invariant under such a transformation, as are the limiting total energies in Theorem \ref{ellnthm}. Therefore, from now on, we will assume without loss of generality that $E_0=0$.

Take any continuous function $\phi:[0,\infty) \ra\rr $ such that 
\begin{equation}\label{mk}
\sum_{k=0}^\infty m_k (k+1)^{\alpha} < \infty,
\end{equation}
where
\[
m_k := \max_{k\le x\le k+1} |\phi(x)|. 
\]
Note that under the above condition,   
\begin{equation}\label{phint}
\int_0^\infty |\phi(x)| x^{\alpha -1} dx < \infty. 
\end{equation}
\begin{lmm}\label{betalmm}
For a function $\phi$ as above, 
\[
\lim_{\beta \downarrow 0} \beta^{\alpha} \sum_{j=1}^\infty \phi(\beta E_j) = L\alpha \int_0^\infty \phi(x) x^{\alpha-1} dx. 
\]
\end{lmm}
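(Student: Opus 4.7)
The plan is to realize the sum as an integral against a rescaled counting measure and then use the Weyl-type asymptotic \eqref{weyl} to pass to the limit. Write $N(\lambda) := \#\{j \ge 1 : E_j \le \lambda\}$ and define, for each $\beta>0$, the positive measure on $(0,\infty)$ given by $\mu_\beta(A) := \beta^\alpha \#\{j \ge 1 : \beta E_j \in A\}$, so that
\[
\beta^\alpha \sum_{j=1}^\infty \phi(\beta E_j) = \int_0^\infty \phi(x)\, d\mu_\beta(x).
\]
From \eqref{weyl} one reads off that for any $0 < a < b < \infty$,
\[
\mu_\beta([a,b]) = \beta^\alpha \bigl(N(b/\beta) - N(a/\beta)\bigr) \longrightarrow L(b^\alpha - a^\alpha)
\]
as $\beta \downarrow 0$. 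Thus $\mu_\beta$ converges vaguely on $(0,\infty)$ to the absolutely continuous measure with density $L\alpha x^{\alpha-1}$, which is exactly the proposed limit.

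To upgrade vague convergence to convergence of the full integral, I would split the integration region at cutoffs $0 < \delta < M$. On the middle piece $[\delta, M]$, vague convergence together with the uniform continuity of $\phi$ gives $\int_\delta^M \phi\, d\mu_\beta \to L\alpha \int_\delta^M \phi(x) x^{\alpha-1}\, dx$ by a standard step-function approximation (the limiting measure has no atoms). On the other hand, the target integral $L\alpha\int_0^\infty \phi(x) x^{\alpha-1}\,dx$ is finite by \eqref{phint}, so sending $\delta \downarrow 0$ and $M \uparrow \infty$ in the limiting integral is harmless; what remains is to control the corresponding tails of the prelimit integral, uniformly in $\beta$.

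The crux of the proof is therefore a uniform-in-$\beta$ tail bound for $\int \phi\, d\mu_\beta$. From \eqref{weyl} there exists a finite constant $C$ such that $N(\lambda) \le C(1 + \lambda^\alpha)$ for all $\lambda \ge 0$, which yields
\[
\mu_\beta([k,k+1)) \le \beta^\alpha C\bigl(1 + ((k+1)/\beta)^\alpha\bigr) \le C\bigl(\beta^\alpha + (k+1)^\alpha\bigr)
\]
for every integer $k \ge 0$. Summing against $m_k$ gives
\[
\int_M^\infty |\phi(x)|\, d\mu_\beta(x) \le C \sum_{k \ge M} m_k\bigl(\beta^\alpha + (k+1)^\alpha\bigr),
\]
which by the summability assumption \eqref{mk} can be made arbitrarily small by choosing $M$ large, uniformly for $\beta \in (0,1]$. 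The contribution on $[0,\delta]$ is handled similarly via $\mu_\beta([0,\delta]) \le C(\beta^\alpha + \delta^\alpha)$ and the boundedness of $\phi$ on a neighborhood of the origin (it is continuous there). Combining the three pieces and then letting $M\uparrow\infty$ and $\delta\downarrow 0$ yields the claim. The main obstacle is this last step: one has to upgrade the purely asymptotic statement \eqref{weyl} into the non-asymptotic envelope $N(\lambda) \le C(1+\lambda^\alpha)$ and then leverage the precise decay built into \eqref{mk} to dominate the tail of $\int|\phi|\,d\mu_\beta$ by a convergent series uniformly in $\beta$; the rest of the argument is Riemann-sum bookkeeping on $[\delta,M]$.
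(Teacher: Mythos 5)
Your argument is correct and is essentially the paper's own proof in slightly different packaging: the paper normalizes the rescaled counting measure to a probability measure on the compact interval $[0,K]$ and uses weak convergence there, whereas you keep the unnormalized rescaled measure $\mu_\beta$ on $(0,\infty)$ and use vague convergence plus a separate near-origin estimate. The substantive ingredients are identical in both versions: the non-asymptotic envelope $N(\lambda)\le C(1+\lambda^\alpha)$ extracted from \eqref{weyl}, and the tail control via the summability hypothesis \eqref{mk}.
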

\begin{proof}
For each $\lambda > 0$ let 
\begin{equation}\label{sdef}
S(\lambda) := \#\{j : E_j \le \lambda\}. 
\end{equation}
By \eqref{weyl}, there exists $C > 0$ such that for all $\lambda > 0$, 
\begin{equation}\label{sbd}
S(\lambda) \le C\lambda^{\alpha}. 
\end{equation}
Take any $\ep > 0$. Using \eqref{mk} and \eqref{phint}, choose an integer $K$ so large that 
\begin{equation}\label{mchoice}
C\sum_{k\ge K} m_k (k+1)^\alpha < \ep, \ \text{ and } \ L\alpha \int_K^\infty |\phi(x)| x^{\alpha -1} dx <\ep. 
\end{equation}
For each $\beta \in (0, K/E_1)$, define the probability measure
\[
\mu_\beta := \frac{1}{S(K/\beta)} \sum_{j\, :\, E_j \le K/\beta}\delta_{\beta E_j},
\]
where $\delta_x$ denotes the point mass at $x$. 
Then 
\begin{align*}
\sum_{j=1}^\infty \phi(\beta E_j) &= \sum_{j\, : \, \beta E_j \le K} \phi(\beta E_j) + \sum_{j\, :\, \beta E_j > K } \phi(\beta E_j) \\
&= S(K/\beta) \int_0^\infty \phi(x) d\mu_\beta(x)  + \sum_{j\, :\, \beta E_j > K } \phi(\beta E_j). 
\end{align*} 
Now note that by \eqref{weyl}, for any $x\in (0, K)$,
\begin{align*}
\lim_{\beta \downarrow 0} \mu_\beta([0, x]) &= \lim_{\beta \downarrow 0} \frac{\#\{j: \beta E_j \le x\}}{S(K/\beta)}\\
&= \lim_{\beta \downarrow 0} \frac{S(x/\beta)}{S(K/\beta)} = \frac{x^\alpha}{K^\alpha}. 
\end{align*}
This shows that as $\beta\ra 0$, $\mu_\beta$ tends weakly to the probability measure on $[0,K]$ with probability density function $\alpha x^{\alpha -1} K^{-\alpha}$. Since $\phi$ is continuous on $[0,\infty)$ (and therefore bounded and continuous on $[0,K]$), this shows that
\[
\lim_{\beta \ra 0} \int_0^\infty \phi(x) d\mu_\beta (x) = \frac{\alpha}{K^\alpha}\int_0^K \phi(x) x^{\alpha-1} dx. 
\]
Next, observe that by \eqref{sbd} and \eqref{mchoice}, 
\begin{align*}
\biggl|\beta^\alpha \sum_{j\, :\, \beta E_j > K } \phi(\beta E_j)\biggr| &\le \beta^\alpha\sum_{k\ge K} m_k \#\{j: k \le \beta E_j< k+1\}\\
&\le \beta^\alpha\sum_{k\ge K} m_k S((k+1)/\beta) \\
&\le C\sum_{k\ge K} m_k (k+1)^\alpha \le \ep.  
\end{align*}
Combining, we get
\begin{align*}
&\limsup_{\beta \ra 0} \biggl|\beta^\alpha \sum_{j=1}^\infty \phi(\beta E_j) - L\alpha \int_0^\infty \phi(x)x^{\alpha -1} dx \biggr| \\
&\le L\alpha \int_K^\infty |\phi(x)|x^{\alpha-1} dx + \limsup_{\beta \ra 0} \biggl|\beta^\alpha \sum_{j\, :\, \beta E_j > K } \phi(\beta E_j)\biggr|\\
&\le 2\ep. 
\end{align*}
Since this holds for any $\ep > 0$, the proof is complete. 
\end{proof}
Fix $\beta > 0$. Let $Z_1, Z_2,\ldots$ be independent random variables, with 
\[
\pp(Z_j = k) = e^{-\beta E_j k} (1-e^{-\beta E_j}), \ \ k=0,1,2,\ldots.
\]
That is, $Z_j$ has a geometric distribution with parameter $1-e^{-\beta E_j}$. Note that by \eqref{sbd}, and the facts that  $S(E_j) \ge j$ for all $j$ and $E_1>0$, it follows that there exists $K> 0$ such that 
\begin{equation}\label{ejbd}
E_j \ge K j^{1/\alpha}
\end{equation}
for all $j$. From this it is easy to conclude that the sequence $Z_1, Z_2,\ldots$ is summable almost surely.  Define 
\[
M := \sum_{j=1}^\infty Z_j.
\]
\begin{lmm}\label{wllnlemma}
Suppose that $\alpha > 1$. As $\beta \ra 0$, $\beta^\alpha M$ converges in probability to $L\alpha \Gamma(\alpha)\zeta(\alpha)$. 
\end{lmm}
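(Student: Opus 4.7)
\emph{Proof plan.} The strategy is the standard first moment/second moment approach: show that $\beta^\alpha \ee(M) \to L\alpha \Gamma(\alpha)\zeta(\alpha)$ and $\beta^{2\alpha}\var(M) \to 0$, then conclude via Chebyshev's inequality. Since $\ee(Z_j)=1/(e^{\beta E_j}-1)$ and $\var(Z_j)=e^{-\beta E_j}/(1-e^{-\beta E_j})^2$, one is tempted to plug $\phi(x)=1/(e^x-1)$ into Lemma~\ref{betalmm}, but this $\phi$ has a non-integrable singularity at $0$ (and similarly for the variance), so the lemma does not apply directly. The whole game is to work around this singularity.

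For the expectation, I would expand $1/(e^x-1)=\sum_{k=1}^\infty e^{-kx}$ and use Tonelli to write $\ee(M)=\sum_{k=1}^\infty \sum_{j=1}^\infty e^{-k\beta E_j}$. For each fixed $k$, the function $\phi_k(x)=e^{-kx}$ is continuous and decays exponentially, so it satisfies~\eqref{mk} and Lemma~\ref{betalmm} yields $\beta^\alpha\sum_j e^{-k\beta E_j}\to L\alpha\Gamma(\alpha)\,k^{-\alpha}$ as $\beta\downarrow 0$. To interchange the limit with the sum over $k$, integrate by parts against $S$ from~\eqref{sdef} and use~\eqref{sbd} to get the uniform dominating bound
\[
\beta^\alpha\sum_{j=1}^\infty e^{-k\beta E_j}\ \le\ k\beta^{\alpha+1}\int_0^\infty e^{-k\beta\lambda}\,C\lambda^\alpha\,d\lambda\ =\ C\,\Gamma(\alpha+1)\,k^{-\alpha}.
\]
Since $\alpha>1$ makes $\sum_k k^{-\alpha}<\infty$, dominated convergence gives $\beta^\alpha\ee(M)\to L\alpha\Gamma(\alpha)\zeta(\alpha)$.

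For the variance I would split $\sum_j\var(Z_j)$ at $\beta E_j=1$. On $\{\beta E_j>1\}$ the factor $(1-e^{-\beta E_j})^{-2}$ is bounded by $(1-e^{-1})^{-2}$, so this part is dominated by a constant times $\sum_j e^{-\beta E_j}=O(\beta^{-\alpha})$, by Lemma~\ref{betalmm} applied to $\phi(x)=e^{-x}$. On $\{\beta E_j\le 1\}$, use $1-e^{-x}\ge x/2$ to bound $\var(Z_j)\le 4/(\beta E_j)^2$, then invoke~\eqref{ejbd} to reduce to the elementary sum $\beta^{-2}\sum_{j\le C/\beta^\alpha}K^{-2}j^{-2/\alpha}$. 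A straightforward case analysis gives total variance bounds $O(\beta^{-\alpha})$, $O(\beta^{-2}\log(1/\beta))$, or $O(\beta^{-2})$ according to whether $\alpha>2$, $\alpha=2$, or $1<\alpha<2$; after multiplying by $\beta^{2\alpha}$, the bounds become $O(\beta^\alpha)$, $O(\beta^2\log(1/\beta))$, or $O(\beta^{2\alpha-2})$, all tending to $0$ since $\alpha>1$.

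The main obstacle throughout is the singularity at the origin that blocks a one-line application of Lemma~\ref{betalmm}; the geometric-series expansion for the mean and the split at $\beta E_j=1$ for the variance each neutralize it. Once these two asymptotics are in place, Chebyshev's inequality $\pp(|\beta^\alpha M-\beta^\alpha\ee(M)|>\ep)\le\beta^{2\alpha}\var(M)/\ep^2$ together with $\beta^\alpha\ee(M)\to L\alpha\Gamma(\alpha)\zeta(\alpha)$ gives convergence in probability of $\beta^\alpha M$ to the same limit.
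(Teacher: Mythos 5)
Your proof is correct. It follows the same moment-method skeleton as the paper's (show $\beta^\alpha\ee(M) \to L\alpha\Gamma(\alpha)\zeta(\alpha)$ and $\beta^{2\alpha}\var(M)\to 0$, then Chebyshev), but the internal steps are genuinely different. The paper applies Lemma~\ref{betalmm} directly to $\phi(x) = 1/(e^x-1)$ for the mean, and, after multiplying and dividing the variance summand by $\beta E_j$ and using $E_j\ge E_1$, to $\psi(x) = xe^{-x}/(1-e^{-x})^2$. You are right that both of these blow up like $1/x$ at the origin, so $m_0=\infty$ and hypothesis~\eqref{mk} is violated; the lemma as stated does not literally cover them (its conclusion does hold for such $\phi$ when $\alpha>1$ since the singularity is integrable against $x^{\alpha-1}$, but that requires a small extension of the lemma's proof, or exactly the kind of workaround you give). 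Your geometric-series expansion, which reduces everything to the bounded functions $e^{-kx}$ plus a dominated-convergence step with the majorant $C\Gamma(\alpha+1)k^{-\alpha}$, closes that gap cleanly for the mean. Your split of the variance sum at $\beta E_j=1$ likewise avoids the singular region and yields $\var(M) = O(\beta^{-\alpha})$, $O(\beta^{-2}\log(1/\beta))$, or $O(\beta^{-2})$ according as $\alpha>2$, $\alpha=2$, or $1<\alpha<2$; all are sufficient. The paper's one-line variance bound gives the uniform $\var(M)=O(\beta^{-\alpha-1})$, slightly weaker than yours for each individual $\alpha$ but also enough. In short, your route is more self-contained given the lemma's stated hypotheses, while the paper's is shorter but tacitly uses an extension of Lemma~\ref{betalmm} to functions with an integrable singularity at the origin.
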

\begin{proof}
Note that 
\begin{align}\label{eem}
\ee(M) &= \sum_{j=1}^\infty \frac{1}{e^{\beta E_j } -1}
\end{align}
and 
\begin{align}\label{varm}
\var(M) = \sum_{j=1}^\infty \frac{e^{-\beta E_j}}{(1-e^{-\beta E_j })^2}. 
\end{align}
By Lemma \ref{betalmm} and the above formulas, we get
\begin{align*}
\lim_{\beta\ra 0} \beta^\alpha \ee(M) &= L\alpha\int_0^\infty \frac{x^{\alpha -1} }{e^x -1} dx\\
&= L\alpha \sum_{k=1}^\infty \int_0^\infty x^{\alpha-1} e^{-kx} dx\\
&= L\alpha\Gamma(\alpha) \sum_{k=1}^\infty k^{-\alpha} = L\alpha \Gamma(\alpha) \zeta(\alpha),
\end{align*}
and 
\begin{align*}
\limsup_{\beta \ra 0} \beta^{1-\alpha}\var(\beta^{\alpha} M) &\le \limsup_{\beta \ra 0} \frac{\beta^{\alpha}}{E_1} \sum_{j=1}^\infty \frac{\beta E_j e^{-\beta E_j}}{(1-e^{-\beta E_j})^2} \\
&= \frac{L\alpha}{E_1} \int_0^\infty \frac{x^{\alpha} e^{-x}}{(1-e^{-x})^2}dx < \infty. 
\end{align*}
Since $\alpha>1$, this show that $\var(\beta^{\alpha} M) \ra 0$ as $\beta \ra 0$. This completes the proof of the lemma. 
\end{proof}
\begin{lmm}\label{wlln1}
Suppose that $\alpha =1$. Then as $\beta \ra 0$, $\beta M/\log (1/\beta)$ converges in probability to $L$. 
\end{lmm}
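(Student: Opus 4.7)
The plan is to apply Chebyshev's inequality: once we show $\beta \ee(M)/\log(1/\beta) \to L$ and $\var(\beta M/\log(1/\beta)) \to 0$, the conclusion follows. The key contrast with Lemma \ref{wllnlemma} is that the function $\phi(x) = 1/(e^x-1)$ fails the integrability condition \eqref{phint} at the origin when $\alpha = 1$, so Lemma \ref{betalmm} is not directly applicable to \eqref{eem}; this singularity is precisely what creates the extra $\log(1/\beta)$ factor.

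For the variance, I would use the elementary bounds $1 - e^{-x} \ge c\min(x,1)$ on $(0,\infty)$ to estimate the summand in \eqref{varm}. Splitting the sum at $\beta E_j = 1$, the low-energy piece is bounded by $(C/\beta^2)\sum_j E_j^{-2}$, and since $\alpha = 1$ forces $E_j \ge Kj$ via \eqref{ejbd}, this series converges. The high-energy piece is at most $C\sum_j e^{-K\beta j} = O(\beta^{-1})$ by comparison with a geometric series. Hence $\var(M) = O(\beta^{-2})$ and $\var(\beta M/\log(1/\beta)) = O(1/\log^2(1/\beta)) \to 0$.

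For the mean, I split \eqref{eem} at $\beta E_j = 1$. The contribution $B$ from $\beta E_j > 1$ is at most $2\sum_{j: \beta E_j>1} e^{-\beta E_j}$, which is again $O(\beta^{-1})$ by the same geometric comparison, hence subdominant to $\log(1/\beta)/\beta$. For the contribution $A$ from $\beta E_j \le 1$, I use the pointwise inequality $0 \le 1/x - 1/(e^x-1) \le 1/2$ on $(0,1]$ (a direct consequence of the Taylor expansion $1/(e^x-1) = 1/x - 1/2 + x/12 - \cdots$) to replace $1/(e^{\beta E_j}-1)$ by $1/(\beta E_j)$ at a cost of $O(S(1/\beta)) = O(\beta^{-1})$, giving
\[
A = \frac{1}{\beta}\sum_{j:\,E_j \le 1/\beta} \frac{1}{E_j} + O(\beta^{-1}).
\]

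The main step is then to show that $\sum_{j:\,E_j \le T} E_j^{-1} \sim L \log T$ as $T \to \infty$. I would use Abel summation against the counting function $S(\lambda)$ from \eqref{sdef}:
\[
\sum_{j:\,E_j \le T} \frac{1}{E_j} = \frac{S(T)}{T} + \int_{E_1}^T \frac{S(\lambda)}{\lambda^2}\,d\lambda.
\]
The boundary term is bounded. For the integral, given $\ep > 0$ pick $\lambda_0$ so that $|S(\lambda)/\lambda - L| < \ep$ for $\lambda \ge \lambda_0$; then the integral equals $L \log T + O_\ep(1) + O(\ep \log T)$, and dividing by $\log T$ and letting $\ep \to 0$ after $T \to \infty$ yields the desired asymptotic. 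Combining pieces, $\beta \ee(M) = L \log(1/\beta) + o(\log(1/\beta))$, and Chebyshev closes the argument. I expect the main obstacle to be the mean estimate: the failure of Lemma \ref{betalmm} at $\alpha = 1$ forces one to extract the leading behavior directly from the Weyl asymptotic \eqref{weyl} via the Abel-summation argument above, rather than by a clean application of the lemma used in the $\alpha > 1$ case.
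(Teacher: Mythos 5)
Your argument is correct, and the key step — the mean estimate — is handled by a genuinely different method than the paper's. The paper expands $1/(e^{\beta E_j}-1)$ as a geometric series, interchanges the order of summation to write $\ee(M) = \sum_{k\ge 1} c_k(\beta)$ with $c_k(\beta) = \sum_j e^{-k\beta E_j}$, truncates the $k$-sum at $1/(\beta\log\log(1/\beta))$, and estimates each $c_k$ by re-applying Lemma \ref{betalmm} (showing $k\beta c_k(\beta) \to L$ uniformly over the retained $k$'s); the $\log(1/\beta)$ then emerges from the harmonic sum $\sum_{k\le C(\beta)} 1/k$. You instead keep the $j$-sum, split at $\beta E_j = 1$, replace $1/(e^{\beta E_j}-1)$ by $1/(\beta E_j)$ at bounded cost, and run Abel/partial summation directly against the Weyl counting function $S(\lambda)$, extracting $L\log T$ from $\int^T S(\lambda)\lambda^{-2}\,d\lambda$. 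Your route is the classical number-theoretic partial-summation argument and is more self-contained — it bypasses the reuse of Lemma \ref{betalmm} entirely — whereas the paper's route is more unified with the machinery already built for the $\alpha>1$ case. For the variance, your split-at-$\beta E_j = 1$ bound gives $\var(M) = O(\beta^{-2})$, which is all that's needed; the paper gets the sharper asymptotic $\var(M) \sim \beta^{-2}\sum_j E_j^{-2}$ via dominated convergence, but this extra precision is not used. Both proofs are complete and correct; the mean computation is the place where they differ substantively, and either is a reasonable choice.
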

\begin{proof}
The identity \eqref{eem} may be rewritten (with the help of \eqref{ejbd}) as 
\begin{align*}
\ee(M) &= \sum_{j=1}^\infty \frac{e^{-\beta E_j}}{1-e^{-\beta E_j}}\\
&= \sum_{j=1}^\infty  \sum_{k=1}^\infty e^{-k\beta E_j} = \sum_{k=1}^\infty c_k(\beta), 
\end{align*}
where 
\[
c_k(\beta) := \sum_{j=1}^\infty e^{-k\beta E_j}. 
\]
By \eqref{ejbd}, there is a constant $K>0$ such that 
\[
c_k(\beta) \le \sum_{j=1}^\infty e^{-Kk\beta j}= \frac{1}{e^{Kk\beta} -1}. 
\]
Now, if $k\beta \ge 1/\log\log(1/\beta)$, then 
\begin{align*}
1 &= e^{Kk\beta-Kk\beta} \le e^{Kk\beta}e^{-K/\log\log(1/\beta)},
\end{align*}
and consequently,
\[
\frac{1}{e^{Kk\beta}-1} \le \frac{e^{-Kk\beta} }{1-e^{-K/\log\log(1/\beta)}}. 
\]
Thus, if 
\[
C(\beta):= \frac{1}{\beta \log\log (1/\beta)},
\]
then 
\begin{align}
\sum_{k\ge C(\beta)} c_k(\beta) &\le \frac{1}{(e^{K/\log\log(1/\beta)} -1) (1-e^{-K\beta})}\label{cupper}\\
&\sim \frac{\log\log(1/\beta)}{K^2\beta} \ \ \text{as } \ \beta \ra 0. \nonumber 
\end{align}
On the other hand, we claim that 
\begin{align}\label{kbeta}
\lim_{\beta \ra0}\max_{k\le C(\beta)} |k\beta c_k(\beta)-L| = 0. 
\end{align}
To prove this, suppose not. Then there exist sequences $\beta_i \ra 0$ and $k_i\le C(\beta_i)$ such that 
\[
k_i \beta_i c_{k_i}(\beta_i) \not \to L. 
\]
But since $k_i \beta_i \ra 0$, it follows as a simple consequence of Lemma \ref{betalmm} that the left hand side of the above display must tend to $L$ as $i\to \infty$. This gives a contradiction, proving \eqref{kbeta}. Applying \eqref{kbeta} gives 
\begin{align*}
\biggl|\sum_{k\le C(\beta)} c_k(\beta) - \sum_{k\le C(\beta)} \frac{L}{k\beta}\biggr| &\le \max_{k\le C(\beta)} |k\beta c_k(\beta)-L| \sum_{k\le C(\beta)} \frac{1}{k\beta} \\
&= o\biggl(\frac{\log(1/\beta)}{\beta}\biggr) \ \ \text{ as } \ \beta \ra0. 
\end{align*}
Combining this with \eqref{cupper} gives 
\begin{equation}\label{e1lim}
\ee(M) \sim \frac{L\log (1/\beta)}{\beta} \ \ \text{ as } \ \beta \ra 0. 
\end{equation}
Next, the identity \eqref{varm} may be rewritten as
\begin{align*}
\var(M) &= \frac{1}{\beta^2}\sum_{j=1}^\infty \frac{\beta^2 E_j^2e^{-\beta E_j}}{(1-e^{-\beta E_j })^2}\frac{1}{E_j^2}.
\end{align*}
By \eqref{ejbd} the series $\sum 1/E_j^2$ converges. On the other hand, the term
\[
\frac{\beta^2 E_j^2e^{-\beta E_j}}{(1-e^{-\beta E_j })^2}
\]
is uniformly bounded over $\beta$ and $j$ and converges to $1$ as $\beta \ra 0$. Thus, $\var(M)$ behaves asymptotically like $\beta^{-2}\sum 1/E_j^2$ as $\beta \ra 0$. Combined with~\eqref{e1lim}, this completes the proof of the lemma. 
\end{proof}
Define
\[
Z_0 := n - \sum_{j=1}^\infty Z_j = n-M. 
\]
Note that $Z_0$ may be negative. The following simple lemma gives the crucial connection between the $Z_i$'s and the  $N_i$'s. Shortly after our manuscript was posted, Paolo Dai Pra and Francesco Caravenna communicated to us that a version of this lemma is stated and used in their recent probability textbook~\cite{daipra}. Incidentally, the connection between geometric random variables and Bose-Einstein condensation is well known in the probabilistic folklore; it is therefore quite likely that this lemma has been discovered and used by probabilists and mathematical physicists in the past.
\begin{lmm}\label{cond}
Take any $n \ge 1$ and $\beta > 0$. The  canonical ensemble for $n$ particles at inverse temperature $\beta$ is the same as the law of $(Z_0, Z_1,Z_2,\ldots)$ conditioned on the event $M\le n$.
\end{lmm}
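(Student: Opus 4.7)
The plan is to compute the conditional distribution directly from the product form of the joint law of $(Z_1, Z_2, \ldots)$, using the reduction $E_0 = 0$ established at the start of the Proofs section.

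First I would fix any configuration $\bn = (n_0, n_1, n_2, \ldots)$ of non-negative integers summing to $n$. Note that on the event $\{M \le n\}$ we have $Z_0 = n - M \ge 0$ and $Z_0 + \sum_{j\ge 1} Z_j = n$, so the sequence $(Z_0, Z_1, Z_2, \ldots)$ is indeed almost surely a configuration of $n$ particles on this event. Moreover, specifying the values $Z_j = n_j$ for $j \ge 1$ forces $Z_0 = n_0$, so the joint event $\{Z_j = n_j \text{ for all } j \ge 0\}$ coincides with the event $\{Z_j = n_j \text{ for all } j \ge 1\}$ as long as $n_0 = n - \sum_{j \ge 1} n_j$.

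Next I would compute, using the independence of the $Z_j$ and their geometric laws,
\[
\pp(Z_j = n_j \text{ for all } j \ge 1) = \prod_{j=1}^\infty e^{-\beta E_j n_j}(1 - e^{-\beta E_j}) = C(\beta)\, \exp\biggl(-\beta \sum_{j=1}^\infty E_j n_j \biggr),
\]
where $C(\beta) := \prod_{j=1}^\infty (1 - e^{-\beta E_j})$ is a constant independent of the configuration. Since $E_0 = 0$, the exponent equals $-\beta H(\bn)$. Therefore, for every configuration $\bn$ summing to $n$,
\[
\pp\bigl((Z_0, Z_1, \ldots) = \bn \bigm| M \le n\bigr) = \frac{C(\beta)\, e^{-\beta H(\bn)}}{\pp(M \le n)},
\]
which is proportional to $e^{-\beta H(\bn)}$ with a normalizing constant that does not depend on $\bn$. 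This is exactly the definition of the canonical Gibbs measure on configurations of $n$ bosons at inverse temperature $\beta$.

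There is no real obstacle here: the lemma is essentially a bookkeeping identity, and the only point worth flagging is the normalization check, which is automatic because both sides are probability measures on the same finite-summing configuration space. The role of the assumption $E_0 = 0$ is merely to absorb the $n_0$ term into $H(\bn)$; without it one would pick up a factor $e^{-\beta E_0 n_0} = e^{-\beta E_0 n} \cdot e^{\beta E_0 M}$, but after conditioning on $\sum_j n_j = n$ the first factor is a constant and the identity still holds up to renormalization, consistent with the invariance of the Gibbs measure under constant shifts of the $E_j$ noted at the start of the section.
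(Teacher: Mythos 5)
Your proof is correct and follows essentially the same route as the paper: compute the product-geometric law of $(Z_1, Z_2, \ldots)$, observe that (after the reduction $E_0 = 0$) it is proportional to $e^{-\beta H(\bn)}$, and condition on $\{M\le n\}$. The paper merely phrases the normalization step slightly differently, by explicitly identifying $Z_n(\beta)\prod_{j\ge 1}(1-e^{-\beta E_j})$ with $\pp(M\le n)$, whereas you let the ``both sides are probability measures on the same space'' observation do that bookkeeping; the substance is identical.
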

\begin{proof}
Let $(N_0, N_1,\ldots)$ be a configuration chosen from the canonical ensemble for $n$ particles at inverse temperature $\beta$. For any sequence of non-negative integers $n_0, n_1,\ldots$ summing to $n$,
\begin{align*}
\pp(N_0 = n_0, N_1 = n_1,\ldots) &= \frac{e^{-\beta \sum_{j=1}^\infty n_j E_j}}{Z_n(\beta)}, 
\end{align*}
where $Z_n(\beta)$ is the normalizing constant. Observe that
\begin{align*}
Z_n(\beta)\prod_{j=1}^\infty(1-e^{-\beta E_j}) &= \biggl(\sum_{n_0, n_1,n_2,\ldots \text{ such that}\atop  n_0+n_1+n_2\cdots = n}e^{-\beta \sum_{j=1}^\infty n_j E_j}\biggr)\prod_{j=1}^\infty(1-e^{-\beta E_j})\\
&=  \biggl(\sum_{n_1,n_2,\ldots \text{ such that}\atop  n_1+n_2+\cdots \le n}e^{-\beta \sum_{j=1}^\infty n_j E_j}\biggr)\prod_{j=1}^\infty(1-e^{-\beta E_j})\\
&= \pp(M\le n). 
\end{align*}
On the other hand
\[
\pp(Z_1=n_1, Z_2=n_2,\ldots) = e^{-\beta \sum_{j=1}^\infty n_j E_j}\prod_{j=1}^\infty(1-e^{-\beta E_j}). 
\]
The proof follows as a consequence of the last three displays. 
\end{proof}

\begin{proof}[Proof of Theorem \ref{wlln}]
First suppose that $\alpha > 1$. Take any $n$ and let $\beta = 1/(k_B T)$, where $T= t n^{1/\alpha}$. Let $Z_0, Z_1,\ldots$ and $M$ be as above. Then by Lemma \ref{wllnlemma}, 
\begin{align*}
\frac{Z_0}{n} &= 1 -\frac{M}{n} \\
&= 1- (k_B t\beta)^\alpha M \\
&\ra 1- (k_Bt)^\alpha L\alpha \Gamma(\alpha) \zeta(\alpha)  = 1-(t/t_c)^\alpha \ \text{ in probability as $n\ra\infty$.}
\end{align*}
By Lemma \ref{cond}, the distribution of $N_0$ is the same as that of $Z_0$ conditional on the event $M\le n$. To finish the proof, note that since $t< t_c$ and by Lemma \ref{wllnlemma} $M/n \ra (t/t_c)^\alpha$, therefore $\pp(M\le n)\ra 1$ as $n \ra\infty$. 

When $\alpha =1$ the proof follows similarly from Lemma \ref{wlln1}, taking $\beta = 1/(k_BT)$, where $T= tn/\log n$. 
\end{proof}
Next we prove Theorem \ref{ellnthm}, since the proof is similar to that of Theorem~\ref{wlln}. Define 
\[
R := \sum_{j=1}^\infty E_j Z_j. 
\]
\begin{lmm}\label{elln}
As $\beta \ra 0$, $\beta^{1+\alpha} R$ converges to $L\alpha \Gamma(\alpha+1)\zeta(\alpha+1)$ in probability. 
\end{lmm}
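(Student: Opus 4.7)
The strategy is to mimic the two-moment argument used for Lemma \ref{wllnlemma}: compute $\ee(R)$ and $\var(R)$ explicitly in terms of the $E_j$'s, apply Lemma \ref{betalmm} with suitably chosen test functions, and finish with Chebyshev's inequality.

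Since each $Z_j$ is geometric with parameter $1-e^{-\beta E_j}$, we have
\[
\ee(R) = \sum_{j=1}^\infty \frac{E_j}{e^{\beta E_j}-1} = \frac{1}{\beta}\sum_{j=1}^\infty \phi(\beta E_j), \qquad \phi(x):=\frac{x}{e^x-1},
\]
and
\[
\var(R) = \sum_{j=1}^\infty \frac{E_j^2 e^{-\beta E_j}}{(1-e^{-\beta E_j})^2} = \frac{1}{\beta^2} \sum_{j=1}^\infty \psi(\beta E_j), \qquad \psi(x):=\frac{x^2 e^{-x}}{(1-e^{-x})^2}.
\]
Both $\phi$ and $\psi$ extend continuously to $x=0$ (with values $1$), and both decay exponentially as $x\to\infty$, so each satisfies the hypothesis \eqref{mk} of Lemma \ref{betalmm} for the given $\alpha\ge 1$. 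The integrability condition \eqref{phint} is also easy: $\int_0^\infty \phi(x) x^{\alpha-1}\,dx$ and $\int_0^\infty \psi(x) x^{\alpha-1}\,dx$ both converge at $0$ (since $\alpha\ge 1$) and at infinity (since both integrands decay exponentially).

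Applying Lemma \ref{betalmm} to $\phi$ gives
\[
\beta^{1+\alpha}\ee(R) = \beta^\alpha \sum_{j=1}^\infty \phi(\beta E_j) \longrightarrow L\alpha \int_0^\infty \frac{x^\alpha}{e^x-1}\,dx = L\alpha\,\Gamma(\alpha+1)\zeta(\alpha+1),
\]
where the last equality follows from the standard expansion $\frac{1}{e^x-1} = \sum_{k\ge 1} e^{-kx}$, integrated term by term, exactly as in the proof of Lemma \ref{wllnlemma}. Applying Lemma \ref{betalmm} to $\psi$ gives
\[
\beta^{2+2\alpha}\var(R) = \beta^\alpha \cdot \Bigl(\beta^{\alpha}\sum_{j=1}^\infty \psi(\beta E_j)\Bigr) \longrightarrow 0,
\]
because the bracketed quantity tends to the finite constant $L\alpha \int_0^\infty \psi(x) x^{\alpha-1}\,dx$ while $\beta^\alpha \to 0$. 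Thus $\var(\beta^{1+\alpha} R) \to 0$, and Chebyshev's inequality yields the claimed convergence in probability.

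I do not expect any genuine obstacle; the only mild point to verify is the continuity of $\phi$ and $\psi$ at the origin (where both removable singularities yield value $1$), which is required to invoke Lemma \ref{betalmm}. The proof works uniformly in $\alpha\ge 1$, so no separate treatment of the $\alpha=1$ case is needed at this stage.
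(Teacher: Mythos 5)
Your proof is correct and follows essentially the same two-moment argument as the paper: identify the test functions $\phi(x)=x/(e^x-1)$ and $\psi(x)=x^2e^{-x}/(1-e^{-x})^2$, apply Lemma \ref{betalmm} to evaluate $\beta^{1+\alpha}\ee(R)$ and to show $\var(\beta^{1+\alpha}R)=O(\beta^\alpha)\to 0$, then conclude via Chebyshev. The only (cosmetic) difference is that the paper records the variance estimate as $\lim_{\beta\to 0}\beta^{2+\alpha}\var(R)<\infty$ and then infers $\var(\beta^{1+\alpha}R)=O(\beta^\alpha)$, whereas you directly show $\beta^{2+2\alpha}\var(R)\to 0$; these are the same observation.
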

\begin{proof}
The proof is similar to the proof of Lemma \ref{wllnlemma}. First, note that 
\begin{align}\label{eemr}
\ee(R) &= \sum_{j=1}^\infty \frac{E_j}{e^{\beta E_j } -1}
\end{align}
and 
\begin{align}\label{varmr}
\var(R) = \sum_{j=1}^\infty \frac{e^{-\beta E_j}E_j^2}{(1-e^{-\beta E_j })^2}. 
\end{align}
By Lemma \ref{betalmm} and the above formulas, we get
\begin{align*}
\lim_{\beta\ra 0} \beta^{1+\alpha} \ee(R) &= L\alpha\int_0^\infty \frac{x^{\alpha} }{e^x -1} dx\\
&= L\alpha \sum_{k=1}^\infty \int_0^\infty x^{\alpha} e^{-kx} dx\\
&= L\alpha\Gamma(\alpha+1) \sum_{k=1}^\infty k^{-\alpha-1} = L\alpha \Gamma(\alpha+1) \zeta(\alpha+1),
\end{align*}
and 
\begin{align*}
\lim_{\beta \ra 0} \beta^{2+\alpha}\var(R) &= \lim_{\beta \ra 0} \beta^{\alpha} \sum_{j=1}^\infty \frac{\beta^2 E_j^2 e^{-\beta E_j}}{(1-e^{-\beta E_j})^2} \\
&= L\alpha \int_0^\infty \frac{x^{\alpha+1} e^{-x}}{(1-e^{-x})^2}dx < \infty. 
\end{align*}
This show that $\var(\beta^{1+\alpha} R) = O(\beta^\alpha)$ as $\beta \ra 0$. This completes the proof of the lemma. 
\end{proof}
\begin{proof}[Proof of Theorem \ref{ellnthm}]
Take any $n$ and let $\beta = 1/(k_B T)$, where $T= t n^{1/\alpha}$ if $\alpha > 1$ and $T=tn/\log n$ if $\alpha =1$. By Lemma \ref{cond}, the distribution of $E_{tot}$ is the same as that of $R$ conditional on the event $M\le n$. To finish the proof, note that since $t< t_c$, Theorem \ref{wlln} implies that $\pp(M\le n)\ra 1$ as $n \ra\infty$; then invoke Lemma \ref{elln}.  
\end{proof}

\begin{proof}[Proof of Theorem \ref{clt}]
First, suppose that $1\le\alpha<2$. Fix $\beta > 0$ and let $Z_0, Z_1,\ldots$ and $M$ be as before. For each $k$, let 
\[
M_k := \sum_{j=1}^k Z_j. 
\]
For $\xi \in \rr$, define
\[
\phi_k(\beta, \xi) := \ee(e^{\I \xi \beta(M_k-\ee(M_k))})
\]
where $\I = \sqrt{-1}$, and let
\[
\phi(\beta, \xi) := \ee(e^{\I \xi \beta(M-\ee(M))}). 
\]
Since $M_k \ra M$ and $\ee(M_k)\ra \ee(M)$ as $k\ra\infty$, therefore
\[
\lim_{k\ra\infty }\phi_k(\beta, \xi) = \phi(\beta, \xi). 
\]
An explicit computation gives
\[
\phi_k(\beta, \xi) = \prod_{j=1}^k \ee(e^{\I \xi \beta(Z_j-\ee(Z_j))}) = \prod_{j=1}^k\frac{1-e^{-\beta E_j}}{1-e^{\beta(\I \xi - E_j)}}\exp\biggl( - \frac{\I\xi \beta e^{-\beta E_j}}{1-e^{-\beta E_j}}\biggr). 
\]
Let $\log$ denote the principal branch of the logarithm function. Then the above formula shows that 
\[
\phi_k(\beta, \xi) = \exp\biggl(\sum_{j=1}^k \biggl[\log (1-e^{-\beta E_j}) - \log(1-e^{\beta(\I \xi - E_j)}) - \frac{\I\xi \beta e^{-\beta E_j}}{1-e^{-\beta E_j}}\biggr]\biggr).
\]
By the inequality \eqref{ejbd}, it is easy to see that the series on the right hand side converges absolutely as $k\ra\infty$. Thus,
\begin{align}
&\phi(\beta, \xi) \label{phiform}\\
&= \exp\biggl(\sum_{j=1}^\infty \biggl[\log (1-e^{-\beta E_j}) - \log(1-e^{\beta(\I \xi - E_j)}) - \frac{\I\xi \beta e^{-\beta E_j}}{1-e^{-\beta E_j}}\biggr]\biggr).\nonumber 
\end{align}
Fix $\xi$ and let $a_j(\beta)$ denote the $j$th term in the sum. Then 
for any $j$, 
\begin{align}\label{philim1}
\lim_{\beta \ra 0} a_j(\beta) 
&= \log E_j - \log (E_j - \I \xi) - \frac{\I\xi}{E_j}.
\end{align}
Expanding $a_j(\beta)$ in power series of the logarithm, we get
\begin{align*}
a_j(\beta) &= \sum_{k=1}^\infty \biggl(\frac{e^{k(\I\beta\xi - \beta E_j)}}{k} - \frac{e^{-k \beta E_j}}{k} - \I \xi \beta e^{-k\beta E_j}\biggr)\\
&= \sum_{k=1}^\infty \biggl(\frac{e^{\I k \beta\xi}- 1 -  \I k \beta\xi}{k} \biggr)e^{-k\beta E_j}
\end{align*}
Again, the inequality \eqref{ejbd} guarantees that the power series expansions converge absolutely. Using the inequality 
\begin{equation}\label{calc}
|e^{\I x} - 1-\I x|\le \frac{x^2}{2}
\end{equation}
that holds for all $x\in \rr$,
\begin{align*}
|a_j(\beta)|\le \frac{\beta^2\xi^2}{2}\sum_{k=1}^\infty k e^{-k \beta E_j} = \frac{\beta^2\xi^2 e^{-\beta E_j}}{2(1-e^{-\beta E_j})^2} = \frac{\beta^2\xi^2}{2(e^{\beta E_j/2}-e^{-\beta E_j/2})^2}. 
\end{align*}
For any $x\ge 0$, $e^x-e^{-x} \ge 2x$. Thus for any $j$ and $\beta > 0$,
\begin{align*}
|a_j(\beta)|&\le \frac{\xi^2}{2E_j^2}.   
\end{align*}
Note that by \eqref{ejbd} and the assumption that $\alpha < 2$, 
\begin{align}\label{ej2}
\sum_{j=1}^\infty \frac{1}{E_j^2} \le \frac{1}{K^2} \sum_{j=1}^\infty \frac{1}{j^{2/\alpha}}< \infty. 
\end{align}
Together with \eqref{phiform} and \eqref{philim1}, this shows
\begin{align}\label{characlim}
\lim_{\beta \ra 0} \phi(\beta, \xi) = \exp\biggl(\sum_{j=1}^\infty \biggl[\log E_j- \log(E_j - \I \xi) - \frac{\I\xi}{E_j}\biggr]\biggr),
\end{align}
and also that the series on the right converges absolutely.

Let $X_1, X_2,\ldots$ be i.i.d.\ exponential random variables with mean $1$. For each $k$, define
\[
U_k := \sum_{j=1}^k\frac{X_j-1}{E_j}.
\]
Observe that $\{U_k\}_{k\ge 1}$ is a martingale with respect to the filtration generated by the sequence $\{X_k\}_{k\ge 1}$. Moreover, by \eqref{ej2},
\begin{align*}
\ee(U_k^2) &= \sum_{j=1}^k \frac{1}{E_j^2} \le\sum_{j=1}^\infty \frac{1}{E_j^2} < \infty.
\end{align*}
Thus, $U_k$ is a uniformly $L^2$ bounded martingale. Therefore the limit
\[
U := \sum_{j=1}^\infty \frac{X_j-1}{E_j} 
\]
exists almost surely and $U_k \ra U$ in $L^2$. A simple application of the dominated convergence theorem shows that the the  characteristic function of $U$ is given by the formula
\[
\ee(e^{\I \xi U}) = \exp\biggl(\sum_{j=1}^\infty \biggl[\log E_j - \log(E_j - \I \xi) - \frac{\I\xi}{E_j}\biggr]\biggr),
\]
which is exactly the right hand side of \eqref{characlim}. This shows that as $\beta \ra 0$, $\beta(M-\ee(M))$ converges in distribution to $U$. 

Now suppose that $1<\alpha < 2$ and let $\beta = 1/(k_BT)$, where $T=tn^{1/\alpha}$. Since $M-\ee(M) = \ee(Z_0)-Z_0$, the above conclusion may be restated as:
\begin{align*}
\frac{Z_0-\ee(Z_0)}{n^{1/\alpha}} &\stackrel{d}{\longrightarrow} -k_B t U \\
&= k_B t (L\alpha \Gamma(\alpha)\zeta(\alpha))^{1/\alpha} W = (t/t_c)W \ \ \text{ as $n\ra\infty$}. 
\end{align*}
To finish, recall that by Lemma \ref{cond}, the law of $N_0$ is the same as that of $Z_0$ conditional on the event $M\le n$, and by Theorem \ref{wlln}, $\pp(M\le n) \ra 1$ as $n\ra\infty$ when $t< t_c$.

Similarly if $\alpha =1$ let $\beta=1/(k_B T)$ where $T=tn/\log n$. Then as above, 
\[
\frac{N_0-\ee(N_0)}{n/\log n} \stackrel{d}{\longrightarrow}(t/t_c)W. 
\]
Next, consider the case $\alpha > 2$. Fix $\beta > 0$ and let $Z_0, Z_1,\ldots$ and $M$ be as before.  Let
\[
\phi(\beta, \xi) := \ee(e^{\I \xi \beta^{\alpha/2}(M-\ee(M))}). 
\]
A similar computation as before gives
\begin{align*}
\phi(\beta, \xi)
&= \exp\biggl(\sum_{j=1}^\infty \biggl[\log (1-e^{-\beta E_j}) - \log(1-e^{\I\beta^{\alpha/2}\xi - \beta E_j}) - \frac{\I\xi \beta^{\alpha/2} e^{-\beta E_j}}{1-e^{-\beta E_j}}\biggr]\biggr). 
\end{align*}
Expanding in power series gives 
\begin{align*}
\phi(\beta, \xi) &= \exp\biggl(\sum_{j=1}^\infty \sum_{k=1}^\infty \biggl(\frac{e^{k(\I\beta^{\alpha/2}\xi - \beta E_j)}}{k} - \frac{e^{-k \beta E_j}}{k} - \I \xi \beta^{\alpha/2} e^{-k\beta E_j}\biggr)\biggr)\\
&= \exp\biggl(\sum_{j=1}^\infty \sum_{k=1}^\infty \biggl(\frac{e^{\I k \beta^{\alpha/2}\xi}- 1 -  \I k \beta^{\alpha/2}\xi}{k} \biggr)e^{-k\beta E_j}\biggr).
\end{align*}
Note that the double series is absolutely convergent by the inequality~\eqref{ejbd}, because the term within the parenthesis may be bounded by $3\beta^{\alpha/2} \xi$ in absolute value, and   
\begin{align*}
\sum_{j=1}^\infty \sum_{k=1}^\infty e^{-k\beta E_j} &= \sum_{j=1}^\infty \frac{1}{e^{\beta E_j}-1} \le \sum_{j=1}^\infty \frac{1}{e^{\beta K j^{1/\alpha}} - 1} < \infty. 
\end{align*}
Therefore the order of summation may be interchanged. For each $k$, let 
\[
a_k(\beta) := \beta^{-\alpha} \biggl(\frac{e^{\I k \beta^{\alpha/2}\xi}- 1 -  \I k \beta^{\alpha/2}\xi}{k}\biggr)
\]
and 
\[
b_k(\beta) := \beta^\alpha \sum_{j=1}^\infty e^{-k \beta E_j},
\]
so that 
\[
\phi(\beta, \xi) =\exp\biggl( \sum_{k=1}^\infty a_k(\beta) b_k(\beta)\biggr). 
\]
Lemma \ref{betalmm} implies that 
\begin{equation}\label{bklim}
\lim_{\beta \ra 0} b_k(\beta)  = L \alpha \int_0^\infty x^{\alpha -1 }e^{-kx}dx = L\alpha \Gamma(\alpha) k^{-\alpha}. 
\end{equation}
On the other hand,
\begin{equation}\label{aklim}
\lim_{\beta \ra 0} a_k(\beta) = - \frac{k\xi^2}{2}. 
\end{equation}
By \eqref{ejbd}, 
\begin{align}
b_k(\beta)&\le \beta^\alpha \sum_{j=1}^\infty e^{-k\beta Kj^{1/\alpha}}\le \beta^\alpha \int_0^\infty e^{-k\beta K x^{1/\alpha}} dx\nonumber \\
&= k^{-\alpha} \int_0^\infty e^{-K y^{1/\alpha}} dy \label{bkbd}
\end{align}
and by \eqref{calc}, 
\begin{align*}
|a_k(\beta)| &\le \frac{k\xi^2}{2}. 
\end{align*}
Consequently, 
\[
|a_k(\beta)b_k(\beta)|\le C k^{1-\alpha},
\]
where $C$ is a constant that does not depend on $k$ or $\beta$. 
Since $\alpha > 2$, therefore by \eqref{bklim} and \eqref{aklim} and the dominated convergence theorem, 
\begin{align*}
\lim_{\beta \ra 0} \phi(\beta, \xi) &= \exp\biggl(-\frac{1}{2}L\alpha \Gamma(\alpha)\xi^2\sum_{k=1}^\infty k^{1-\alpha}\biggr) \\
&=\exp\biggl( -\frac{1}{2}L\alpha \Gamma(\alpha)\zeta(\alpha-1)\xi^2\biggr). 
\end{align*}
This shows that 
\[
\beta^{\alpha/2}(M-\ee(M)) \stackrel{d}{\longrightarrow} {\mathcal N}(0, L\alpha \Gamma(\alpha)\zeta(\alpha-1)) \ \ \text{ as $\beta \ra 0$}. 
\]
Now let $\beta = 1/(k_BT)$, where $T=t n^{1/\alpha}$. Since $M-\ee(M) = \ee(Z_0)-Z_0$, 
\begin{align*}
\frac{Z_0-\ee(Z_0)}{\sqrt{n}} &\stackrel{d}{\longrightarrow} {\mathcal N}(0,(k_B t)^{\alpha} L\alpha \Gamma(\alpha)\zeta(\alpha-1)) \\
&= {\mathcal N}(0, (t/t_c)^\alpha \zeta(\alpha-1)/\zeta(\alpha)). 
\end{align*}
As in the case $1< \alpha < 2$, Theorem \ref{wlln} and Lemma \ref{cond} allow replacing $Z_0$ by $N_0$ in the above display.

Finally, suppose that $\alpha =2$. Fix $\beta\in (0,1)$ and let 
\[
\gamma = \gamma(\beta) := \frac{\beta}{\sqrt{\log (1/\beta)}}\ . 
\]
Let $Z_0, Z_1,\ldots$ and $M$ be as before, and let
\[
\phi(\beta, \xi) := \log \ee(e^{\I \xi \gamma(M-\ee(M))}). 
\]
As in the case $\alpha >2$, this gives the formula
\begin{align*}
\phi(\beta, \xi) &= \exp\biggl(\sum_{j=1}^\infty \sum_{k=1}^\infty \biggl(\frac{e^{\I k \gamma\xi}- 1 -  \I k \gamma\xi}{k} \biggr)e^{-k\beta E_j}\biggr).
\end{align*}
Using \eqref{ejbd}, the double series is absolutely convergent. Fix $\xi$, and define 
\[
a_k(\beta) := \beta^{-2} \biggl(\frac{e^{\I k \gamma\xi}- 1 -  \I k \gamma\xi}{k}\biggr)
\]
and 
\[
b_k(\beta) := \beta^2 \sum_{j=1}^\infty e^{-k \beta E_j},
\]
so that 
\[
\phi(\beta, \xi) =\exp\biggl( \sum_{k=1}^\infty a_k(\beta) b_k(\beta)\biggr). 
\]
By the inequality
\[
|e^{\I x} - 1 - \I x|\le 2|x|
\] 
we have
\[
|a_k(\beta)|\le \frac{2|\xi|}{\beta \sqrt{\log (1/\beta)}},
\]
and by the same argument that led to \eqref{bkbd}, 
\begin{equation}\label{bkbd2}
b_k(\beta) \le Ck^{-2},
\end{equation}
where 
\[
C = \int_0^\infty e^{-Ky^{1/2}} dy. 
\]
Thus, taking
\[
\eta = \eta(\beta) := \frac{1}{\beta (\log (1/\beta))^{1/4}}\ ,
\]
the last two inequalities give 
\begin{align*}
\sum_{k > \eta} |a_k(\beta) b_k(\beta)|\le \frac{2C|\xi|}{\beta \sqrt{\log(1/\beta)}} \sum_{k> \eta} k^{-2}. 
\end{align*}
Since the right hand side tends to zero as $\beta \ra 0$, we see that 
\begin{equation}\label{remain1}
\lim_{\beta \ra 0} \frac{\phi(\beta,\xi)}{\phi_1(\beta,\xi)} = 1,
\end{equation}
where 
\[
\phi_1(\beta,\xi) := \exp\biggl(\sum_{k\le \eta} a_k(\beta)b_k(\beta)\biggr). 
\]
Next, note that the inequality
\[
\biggl|e^{\I x} - 1- \I x + \frac{x^2}{2}\biggr| \le \frac{|x|^3}{6}
\]
gives
\begin{align}\label{akbkbd}
\biggl|a_k(\beta) + \frac{k\xi^2}{2\log(1/\beta)}\biggr| &\le \frac{k^2\beta |\xi|^3}{12(\log(1/\beta))^{3/2}}. 
\end{align}
By \eqref{bkbd2} and \eqref{akbkbd}, 
\begin{align*}
\biggl|\sum_{k\le \eta}a_k(\beta) b_k(\beta) - \frac{\xi^2}{2\log(1/\beta)}\sum_{k\le \eta}kb_k(\beta) \biggr| &\le \frac{\beta |\xi|^3}{12(\log(1/\beta))^{3/2}}\sum_{k\le \eta} k^2b_k(\beta)\\
&\le \frac{C|\xi|^3}{12(\log(1/\beta))^{7/4}}. 
\end{align*}
Thus, defining
\[
\phi_2(\beta, \xi) := \exp\biggl(-\frac{\xi^2}{2\log(1/\beta)}\sum_{k\le \eta} k b_k(\beta)\biggr) 
\]
gives
\begin{equation}\label{remain2}
\lim_{\beta \ra 0} \frac{\phi_1(\beta,\xi)}{\phi_2(\beta,\xi)} = 1. 
\end{equation}
Define
\[
\delta(\beta) := \max_{k\le \eta} |k^2b_k(\beta)- 2L|.
\]
We claim that 
\begin{equation}\label{deltalim}
\lim_{\beta \ra 0} \delta(\beta) = 0.
\end{equation}
To see this, suppose not. Then there exists a sequence $\beta_i \ra 0$ and a sequence of integers $k_i$ such that $k_i \le\eta_i:= \eta(\beta_i)$ for all $i$, and 
\[
\lim_{i\ra \infty} k_i^2b_{k_i}(\beta_i)\ne 2L. 
\]
However, since $\beta_i k_i \ra 0$, Lemma \ref{betalmm} implies the above limit must be equal to $2L$. This gives a contradiction which proves \eqref{deltalim}. Now note that 
\begin{align*}
&\biggl|\frac{1}{2\log(1/\beta)}\sum_{k\le \eta} kb_k(\beta) -  \frac{L}{\log(1/\beta)} 
\sum_{k\le \eta} \frac{1}{k}\biggr|\\
&\le \frac{1}{2\log(1/\beta)}\sum_{k\le \eta}\frac{\delta(\beta)}{k}\le \frac{\delta(\beta)\log\eta}{2\log(1/\beta)}.
\end{align*}
From \eqref{deltalim}, the above bound tends to zero as $\beta \ra 0$. 
Thus,
\[
\lim_{\beta \ra 0} \phi_2(\beta, \xi) = e^{ -L \xi^2}. 
\]
Therefore by \eqref{remain1} and \eqref{remain2}, $\phi(\beta, \xi)$ also tend to the same limit as $\beta \ra 0$. In other words,
\[
\frac{\beta(M-\ee(M))}{\sqrt{\log(1/\beta)}} \stackrel{d}{\longrightarrow} {\mathcal N}(0, 2L) \ \ \text{ as $\beta \ra 0$.}
\]
Now let $\beta = 1/(k_BT)$, where $T=t\sqrt{n}$. Then, as $n\ra\infty$, the above display may be written as
\[
\frac{M-\ee(M)}{\sqrt{n\log n}} \stackrel{d}{\longrightarrow} {\mathcal N}(0, k_B^2 t^2L) = {\mathcal N}(0,(t/t_c)^2/(2\zeta(2))).
\]
Proceeding as in the other two cases, we get the central limit theorem for~$N_0$. This completes the proof of Theorem \ref{clt}. 
\end{proof}
\begin{proof}[Proof of Theorem \ref{other}]
First suppose that $\alpha > 1$. Recall that for $j\ge 1$, $Z_j$ is a geometric random variable with mean $1/(e^{\beta E_j} -1 )$. Consequently, the limiting distribution of $\beta Z_j$ as $\beta \ra 0$ is exponential with mean $1/E_j$. Taking $\beta = 1/(k_B T)$ where $T=tn^{1/\alpha}$, we get that $Z_j/n^{1/\alpha}$ converges in law to exponential with mean $k_B t/E_j$. By Lemma \ref{cond}, the distribution of $N_j$ is the same as that of $Z_j$ conditional on the event $M\le n$. Since $t< t_c$, Lemma \ref{wllnlemma} implies that $\pp(M\le n) \ra 1$ as $n\ra\infty$. Thus $Z_j/n^{1/\alpha}$ and $N_j/n^{1/\alpha}$ have the same limiting distribution. 

Note that the above argument was carried out under the assumption that $E_0=0$. If $E_0\ne 0$, simply replace $E_j$ by $E_j-E_0$. 

When $\alpha=1$, the same argument can be carried out with $\beta = 1/k_B T$, where $T=tn/\log n$. 
\end{proof}

\begin{proof}[Proof of Theorem \ref{wtail}]
We are aided in this classical computation by the work of Montgomery and Odlyzko \cite{mo}. They proved similar theorems for Bernoulli random variables. Their paper contains further refinements which could also be developed for the present example.

Without loss of generality, assume that $E_0=0$. The random variable $(1-X_j)/E_j$ has mean zero and moment generating function
\[
\ee(e^{\lambda(1-X_j)/E_j}) = \frac{e^{\lambda/E_j}}{1+\lambda/E_j}. 
\]
The following bounds are needed: For any $x\ge 0$,
\begin{align}
&\frac{e^x}{1+x} \le e^x, \label{p1}\\
&\frac{e^x}{1+x} \le e^{x^2/2}. \label{p2}
\end{align}
Further,
\begin{align}
&\frac{e^x}{1+x} \ge e^{x/2}\ \text{ for } \ x\ge 3,\label{p3}\\
&\frac{e^x}{1+x} \ge e^{x^2/6} \ \text{ for } \ 0\le x\le 3. \label{p4}
\end{align}
The bound \eqref{p1} is obvious. For \eqref{p2}, consider two cases: (a) $0< x< 1$; then $e^x < (1+x)e^{x^2/2}$ is equivalent to
\[
x < \frac{x^2}{2} + x - \frac{x^2}{2} + \frac{x^3}{3} -\cdots,
\]
or equivalently
\[
0  < \biggl(\frac{x^3}{3} - \frac{x^4}{4}\biggr) + \biggl(\frac{x^5}{5} - \frac{x^6}{6}\biggr) + \cdots.
\]
This last inequality is true termwise. 

(b) $1\le x< \infty$; here 
\[
\frac{e^x}{1+x} \le \frac{e^x}{2} < e^{x^2/2}
\]
if and only if 
\[
x< \frac{x^2}{2}+\log 2,
\]
but $x^2/2 - x+\log 2$ has a unique minimum at $1$ where it is positive. This completes the proof of \eqref{p2}. 

For \eqref{p3}, note that $e^x/(1+x) > e^{x/2}$ is equivalent to $e^{x/2} > 1+x$, which may be written as
\[
\frac{x^2}{8} + \frac{x^3}{48} + \cdots > \frac{x}{2}. 
\]
This inequality is true for $x\ge 3$. 

For \eqref{p4}, again work in cases. For $0\le x < 1$, taking logs allows checking 
\[
x > \frac{x^2}{6} + x - \frac{x^2}{2} + \frac{x^3}{3} - \cdots
\]
or equivalently
\[
\biggl( -\frac{x^2}{3}+\frac{x^3}{3} \biggr) + \biggl(-\frac{x^4}{4} + \frac{x^5}{5}\biggr) + \cdots < 0. 
\]
The left side has each term negative. A similar check, expanding the logarithm and pairing terms, works for $1\le x\le 2$ and $2\le x\le 3$. 

The stated upper bounds in Theorem \ref{wtail} now follow from the argument of Montgomery and Odlyzko word for word with \eqref{p1}--\eqref{p4} substituting for their inequalities (7)--(10). We omit further details. 

Recall the definition \eqref{sdef} of $S(\lambda)$. Define 
\[
S(\lambda-):= \lim_{x\uparrow \lambda} S(x). 
\]
When $E_j$'s satisfy \eqref{weyl}, $S(\lambda)$ and $S(\lambda-)$ are both asymptotic to $L\lambda^\alpha$. Since $S(E_j) \ge j$ and $S(E_j-)\le j$ and $E_1> 0$, there are positive constants $a$ and $b$ such that
\[
a j^{1/\alpha} \le E_j \le b j^{1/\alpha}
\]
for each $j\ge 1$. Consequently, if $\alpha>1$, $n_x$ and $n_x'$ are bounded above and below by constant multiples of $x^{\alpha/(\alpha-1)}$. This, combined  with \eqref{wx1} and \eqref{wx2}, implies \eqref{a1a2}. Similarly for \eqref{a1a22}, note that if $\alpha=1$ then $\log n_x$ and $\log n_x'$ are both bounded above and below by constant multiples of $x$.

Finally, to prove \eqref{wx3}, notice that the random variable $-W$ has moment generating function
\[
\phi(\lambda) = \ee(e^{-\lambda W}) = \prod_{j=1}^\infty \frac{e^{-\lambda/E_j}}{1-\lambda/E_j}, \ \ -\infty < \lambda < E_1.
\]
By Markov's inequality, 
\[
\pp(W\le -x) \le e^{-\lambda x} \phi(\lambda) 
\]
for any $0< \lambda < E_1$. This proves \eqref{wx3}. To see that no power $x^{1+\ep}$ will do, observe that $\phi(E_1)=\infty$. A tail bound with $x^{1+\ep}$ would give $\phi(\lambda)< \infty$ for all $\lambda$. 
\end{proof}

\begin{proof}[Proof of Proposition \ref{gumbel}]
Take any $k$ and let $Y_1, Y_2,\ldots, Y_k$ are i.i.d.\ exponential random variables with mean $1$. Let $Y_{(j)}$ be the $j$th largest $Y_i$, and $Y_{(k+1)}=0$. Let $Z_j:= Y_{(j)}-Y_{(j+1)}$. It is a simple fact that the random variables $Z_1, \ldots, Z_k$ are independent, and $Z_j$ is exponentially distributed with mean $1/j$. Thus, if $X_1,X_2,\ldots$ are i.i.d.\ exponential random variables with mean $1$, then the distribution of $\sum_{j=1}^k X_j/j$  is the same as that of $\sum_{j=1}^k Z_j$. But note that $\sum_{j=1}^k Z_j = Y_{(1)}$. Thus, if 
\begin{align*}
W_k := \sum_{j=1}^k \frac{1-X_j}{j},
\end{align*}
then for all $-\infty <x\le \sum_{j=1}^k 1/j$, 
\begin{align*}
\pp(W_k \ge x) &= \pp\biggl(Y_{(1)} \le \sum_{j=1}^k \frac{1}{j} -x\biggr) \\
&= (1-e^{x-\sum_{j=1}^k 1/j})^k.
\end{align*}
Thus,
\begin{align*}
\pp(W\ge x) &= \lim_{k\to \infty}(1-e^{x-\sum_{j=1}^k 1/j})^k\\
&= \exp\biggl(-\lim_{k\to \infty}e^{x-\sum_{j=1}^k 1/j + \log k}\biggr)\\
&= e^{-e^{x-\gamma}}. 
\end{align*}
This completes the proof of the proposition. Incidentally, a version of this proof may also be found in \cite[Example B.11]{janson}.
\end{proof}

\vskip.2in
\noindent{\bf Acknowledgments.} The authors would like to gratefully acknowledge James Zhao, Megan Bernstein and Susan Holmes for their  generous help with the computer; Elliott Lieb, Werner Krauth, Kannan Soundararajan, Joe Pul\'e, Valentin Zagrebnov, Paolo Dai Pra, Francesco Carvenna and Charles Radin for their comments and suggestions; and Lenny Susskind and Steve Shenker for their inspiring course on quantum mechanics at Stanford that was instrumental in getting the authors interested in this topic. The authors particularly thank the referees for pointing out many references that were missed in the first draft.

\end{document}